\documentclass[a4paper,12pt]{article}
\usepackage{graphicx}%
\usepackage{multirow}%
\usepackage{amsmath,amssymb,amsfonts}%
\usepackage{amsthm}%
\usepackage{mathrsfs}%
\usepackage[title]{appendix}%
\usepackage{xcolor}%
\usepackage{textcomp}%
\usepackage{manyfoot}%
\usepackage{booktabs}%
\usepackage{algorithm}%
\usepackage{algorithmicx}%
\usepackage{algpseudocode}%
\usepackage{listings}%
\usepackage{mathtools} 
\usepackage{accents} 
\usepackage{bm} 
\usepackage{enumitem} 
\usepackage[noabbrev]{cleveref} 
\usepackage[normalem]{ulem} 

\newcommand{\ud}{\, \mathrm{d}}
\newcommand{\R}{\mathbb{R}}
\newcommand{\inner}[2]{\langle #1, #2 \rangle}
\newcommand{\mutx}{\mu_{(t,x)}}

\newcommand{\interior}[1]{%
  {\kern0pt#1}^{\mathrm{o}}%
}

\definecolor{dkgreen}{rgb}{0,0.4,0}
\definecolor{dkblue}{rgb}{0,0,0.8}

\DeclareMathOperator*{\esssup}{ess\,sup}

\newtheorem{theorem}{Theorem}
\newtheorem{proposition}{Proposition}
\newtheorem{lemma}{Lemma}

\newtheorem{remark}{Remark}%
\newtheorem*{remark*}{Remark}%

\newtheorem{definition}{Definition}%
\newtheorem{assumption}{Assumption}%

\begin{document}
	
	\title{Concentration of measure-valued solutions\\ for semilinear parabolic equations}
	
	\author{Charlie Lebarbé$^{1,2}$, \'{E}milien Flayac$^1$, Michel Fournié$^1$,\\ Didier Henrion$^{2,3}$, Milan Korda$^{2,3}$}
	
\footnotetext[1]{Fédération ENAC ISAE-SUPAERO ONERA, Université de Toulouse, 10 Av. Marc Pélegrin, 31055 Toulouse, France}

\footnotetext[2]{LAAS-CNRS, Université de Toulouse, 7 Av. Col. Roche, 31400 Toulouse, France}

\footnotetext[3]{Faculty of Electrical Engineering, Czech Technical University in Prague, Technick\'{a} 2, 16626 Prague, Czechia.}
	
	\date{\today}
	\maketitle

\begin{abstract}The moment-sum-of-squares  hierarchy provides a powerful framework for solving non-convex optimal control problems by constructing a sequence of convex semidefinite relaxations. However, when extending these methods to nonlinear partial differential equations (PDEs), a fundamental challenge is the potential existence of a relaxation gap, where the solution to the linear measure formulation using occupation measures fails to correspond to a classical physical solution of the original PDE. In this paper, we prove the absence of a relaxation gap for scalar semilinear parabolic PDEs of the reaction-diffusion type. We do so by showing that each solution to the linear measure equation gives rise to an energy measure-valued (emv) solution in the space of Young measures satisfying suitable energy identities. We then prove that any such emv solution concentrates on the solution to the nonlinear PDE, provided the latter exists and is unique.  
To the best of our knowledge, this is the first concentration result of this kind for measure-valued solutions of reaction-diffusion PDEs.\\  {\bf Keywords:} Reaction-diffusion PDEs; Measure-valued solutions; Relaxation gap; Concentration of measure; Occupation measures; Moment-SOS hierarchy.
	\end{abstract}

\section{Introduction}\label{sec1}

The moment-sum-of-squares or Lasserre hierarchy, introduced by Lasserre in \cite{lasserre2001}, provides a powerful framework for solving non-convex polynomial optimization problems to global optimality, see e.g. \cite{nie2023,theobald2024} and references therein. This method constructs a sequence of convex semidefinite programming relaxations of increasing size, whose solutions are guaranteed to converge to the true global optimum. The success of this approach led to its extension to the realm of optimal control for systems governed by polynomial ordinary differential equations (ODEs) \cite{lasserre2008nonlinear}, building upon foundational work on linear formulations of optimal control problems using Young measures and occupation measures \cite{young1969,lv1980,rubio1976extremal,vinter1993convex,hht1996}. More recently, these techniques have been further extended to tackle the significantly more challenging domain of nonlinear partial differential equations (PDEs), see \cite{khl2020,KHL-2018} and references therein.

A central challenge when applying these methods to nonlinear PDEs is the potential existence of a \emph{relaxation gap}. This gap represents a fundamental discrepancy between the solution of the original nonlinear, non-convex problem and the solution of its convex relaxation formulated in the space of measures. If a gap exists, the solution to the linear problem may not correspond to any classical solution of the original nonlinear problem, potentially representing a statistical average over multiple non-physical solutions \cite{ft2022,kr2022}. The presence of such a gap can undermine the convergence guarantees of numerical schemes based on the moment-SOS hierarchy.

Recent research has focused on identifying classes of problems where this relaxation gap can be proven to be zero.

For convex optimal control problems involving both ODEs and PDEs, it has been shown that there is no gap \cite{hkkr2024}. For non-convex problems, the situation is more delicate. For nonlinear ODEs, no gap is implied by the convexity of the minimized Lagrangian as a function of the admissible velocities (even if the problem as a whole is non-convex)~\cite{rubio1976extremal,vinter1993convex} or by geometric conditions~\cite{augier2025sufficient}. In general, for ODEs, a relaxation gap is a rare and a rather pathological phenomenon~\cite{palladino2014minimizers,augier2026young}. For nonlinear PDEs, the landscape is much richer. Convexity of the minimized Lagrangian as a function of the admissible velocities is sufficient for the absence of a relaxation gap only in co-dimension one (i.e., the unknown function is a scalar function of possibly many variables) but counterexamples exist in higher codimensions; these results were recently proved in \cite{kr2022}. Besides these general results, one has to argue on a case-by-case basis, tailoring the arguments to the specific problem at hand.

For scalar hyperbolic conservation laws, which are inherently non-convex, it has been shown that the relaxation gap is closed by enforcing entropy inequalities, which are sufficient to single out the unique, physically meaningful solution \cite{burgers,cardoen2024}. Similarly, for the specific hyperbolic-parabolic cross-diffusion system studied in \cite{mvsol_hyperbolic_parabolic_2025}, the authors prove that any measure-valued solution satisfying the relevant Shannon/Rao entropy inequalities concentrates on the classical solution of the PDE system, whenever such a classical solution exists. Other approaches have used SOS techniques to compute bounds for nonlinear PDEs, particularly in fluid dynamics, though without addressing the questions of a relaxation gap and convergence guarantees \cite{bc2006,gc2012,cghp2014,vap2016,gf2019,fgc2022}. The optimal control of linear PDEs has also been addressed, but again, without guarantees of convergence to the global optimum \cite{magron2020}. A distinct line of work has tackled the relaxation gap problem by considering occupation measures supported on infinite-dimensional functional spaces. In this context, no relaxation gap can be ensured for a very broad class of evolution equations with dissipative operators \cite{chhatoi2024optimizing}. Galerkin approximations are then used to formulate tractable finite-dimensional moment problems \cite{flmw2020,hikv2023,hr2025}, but the efficiency of these methods in optimization and optimal control remains to be established. 

This work focuses on \emph{semilinear parabolic PDEs} of the reaction-diffusion type. Our main contribution is to prove that for this important class of equations, \emph{there is no relaxation gap} in the linear formulation on occupation measures from \cite{KHL-2018} working with measures that have finite-dimensional support on subsets of the Euclidean space. We do so by proving that the occupation measure relaxation of the nonlinear PDE has a unique solution concentrated on the solution to the nonlinear PDE, provided the latter exists and is unique. Compared to the formulation from \cite{KHL-2018}, we add one more affine constraint that forces the concentration of the time derivative of the measure-valued solution on the time derivative of the solution to the PDE.

To the best of our knowledge, it is the first result of concentration of measure-valued solutions for reaction-diffusion PDEs. This equivalence provides a sound theoretical foundation for applying the moment-SOS hierarchy to reaction-diffusion systems, guaranteeing that the sequence of solutions obtained from the semidefinite relaxations will converge to the moments of the true classical solution. In this paper, we focus on the fundamental problem of formulating the nonlinear PDE  as a linear measure equation, i.e. there is no parametrization of the set of initial conditions, and no external control input. The boundary conditions of the linear measure equation are concentrated on the boundary conditions of the nonlinear PDE. This is the first building block toward applications that include control and/or parametric boundary data.

The remainder of the paper is organized as follows. Section \ref{sec:notations} introduces the notations and functional spaces used throughout our analysis. In Section \ref{sec:problem}, we formulate the semilinear reaction-diffusion problem, detail the assumptions regarding the nonlinear reaction term, and establish the existence, uniqueness, and energy identities of its weak solutions. Section \ref{section:measure_valued_solutions} introduces the measure-valued framework: we define Young measures with finite moments and appropriate integration-by-parts formulas, leading to the definition of energy measure-valued (emv) solutions. This section culminates in the statement of our main result (Theorem \ref{theorem:concentration_emv_solutions}), which establishes the concentration of emv solutions. Section \ref{sec:proof} is entirely dedicated to the rigorous proof of this main theorem, utilizing a squared-error density argument to demonstrate that the measure-valued solutions concentrate on the classical weak solution. Section \ref{sec:occupation} details the connection between emv solutions and the occupation-measure formulation. Finally, Section \ref{sec:conclusion} provides concluding remarks. Deferred technical proofs of the supporting propositions are collected in the appendices \ref{appendix:proof_properties_sobolev}, \ref{appendix:proof_uniqueness_weak_solutions} and \ref{appendix:proof_energy_identity_for_weak_solutions}.

\section{Notations}\label{sec:notations}

Let $X\subset\mathbb{R}^n$ for a positive integer $n$. We write $C(X)$ for the space of continuous functions from $X$ to $\R$, and $C_b(X)$ for the subspace of functions in $C(X)$ that are bounded. For $k\in\mathbb{N}\cup\{\infty\}$, $C^k(X)$ denotes the space of $k$-times continuously differentiable functions, while $C_c^k(X)$ denotes its subspace of compactly supported functions. We denote by $\mathcal{M}(X)$ the space of finite signed Borel measures on $X$, by $\mathcal{M}_+(X)$ the cone of finite nonnegative Borel measures, and by $\mathcal{P}(X)$ the set of probability measures. Given $\mu\in\mathcal{M}_+(X)$ and $1\le p\le\infty$, $L^p(X,\mu)$ denotes the usual Lebesgue space. In particular, we write $L^p(X)=L^p(X,\mathcal{L}^n)$ for the Lebesgue measure $\mathcal{L}^n$ restricted to $X$. For integers $k\geq1$, we denote by $W^{k,p}(X)$ the Sobolev space of functions $y\in L^p(X)$ whose weak derivatives $\partial_\alpha y$ belong to $L^p(X)$ for each multiindex $\left\lvert\alpha\right\rvert\le k$. We write $H^k(X)=W^{k,2}(X)$ and we denote by $W_0^{k,p}(X)$ the closure of $C^\infty_c(X)$ in $W^{k,p}(X)$. We denote by $H^{-1}(X)$ the dual space of $H^1_0(X)$.

Let $T>0$ and let $B$ be a real Banach space with norm $\left\lVert \cdot\right\rVert_B$. We denote by $C([0,T];B)$ the space of continuous functions $y\colon[0,T]\to B,\, t \mapsto y(t,\cdot)$, where $y(t,\cdot) \in B$ represents the slice of the map $y$ at time $t$. We denote by $L^{p}(0,T;B)$, $1\le p\le\infty$, the Bochner space of (strongly) measurable functions $y\colon[0,T]\to B$ such that the map $t\mapsto \left\lVert y(t,\cdot)\right\rVert_{B}$ belongs to $L^{p}((0,T))$. 
For $1\leq p<\infty$ and when no confusion arises, we
identify $L^p(0,T;L^p(X))$ with the usual Lebesgue space $L^p((0,T)\times X)$. Similarly, we denote by $W^{1,p}(0,T;B)$ the Bochner-Sobolev space of functions $y\in L^p(0,T;B)$ such that $\partial_t y \in L^p(0,T;B)$. 

\section{Problem formulation}\label{sec:problem}

Let $T>0$ be a finite time horizon and let $\Omega$ be an open bounded domain of $\R^n$ ($n\geq1$) with locally Lipschitz boundary. We consider the following scalar semilinear parabolic reaction-diffusion PDE
\begin{equation}
\label{eq:semilinear_PDE}
\left\{
\begin{alignedat}{2}
\partial_t y(t,x) - \Delta_x y(t,x) &= f(t,x,y(t,x)), &\quad& t \in (0,T], \, x \in \Omega,\\
y(t,x) &= 0, &\quad& t\in[0,T], \, x \in \partial \Omega,\\
y(0,x) &= y_0(x), &\quad& x\in\Omega,
\end{alignedat}
\right.
\end{equation}
where $y$ is the solution (in a sense and in a functional space to be specified below) and $y_0 \in W^{1,r}_0(\Omega)$ is a prescribed initial condition. Throughout this paper, $r$ is a fixed exponent satisfying
\begin{equation}
\label{eq:definition_r}
    r \geq 2 \quad \text{and} \quad r>n.
\end{equation}
In this particular setting, the Sobolev embedding theorem (see \cite[Theorem 4.12]{adams_sobolev_2003}) yields the continuous embedding
\begin{equation}
\label{eq:continuous_embedding_W_1r}
W^{1,r}(\Omega)\hookrightarrow C(\overline{\Omega}).
\end{equation}
Furthermore, $W^{1,r}(\Omega)$ is a Banach algebra under pointwise multiplication, i.e. $uv\in W^{1,r}(\Omega)$ for all $u,v\in W^{1,r}(\Omega)$, and the product map is continuous; see \cite[Theorem 4.39]{adams_sobolev_2003}. The same conclusions also hold for $W^{1,r}_0(\Omega)$.
\begin{remark}
    Because $\Omega$ is bounded with locally Lipschitz boundary, the geometric assumptions appearing in the above results are automatically satisfied. Indeed, in the sense of \cite{adams_sobolev_2003}, such a domain satisfies both the \emph{strong local Lipschitz condition} and the \emph{cone condition}.
\end{remark}
The following proposition gathers two basic product estimates deduced from the Sobolev embedding theorem.
\begin{proposition}
\label{prop:properties_sobolev}
Let $r\in\R$ be such that $r\geq2$ and $r>n$.
\begin{enumerate}[label={(\roman*)}, ref={\theproposition (\roman*)}]
\item For any $(u,v) \in W^{1,r}(\Omega)\times H^1(\Omega)$, one has $uv \in H^1(\Omega)$. \label{prop:properties_sobolev_W1r}
\item For any $1/2 \leq \alpha \leq 1$ and any $(u,v) \in L^{\alpha r}(\Omega)\times H^1(\Omega)$, one has $uv \in L^{2\alpha}(\Omega)$. \label{prop:properties_sobolev_Lr2}
\end{enumerate}
\end{proposition}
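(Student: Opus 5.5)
For item (i), I will use the continuous embedding \eqref{eq:continuous_embedding_W_1r}, so that $u\in L^\infty(\Omega)$ with $\|u\|_{L^\infty}\le C\|u\|_{W^{1,r}}$. This immediately gives $uv\in L^2(\Omega)$. For the gradient, I first work with smooth approximants. Since $\Omega$ is Lipschitz, $C^\infty(\overline\Omega)$ is dense in both $W^{1,r}(\Omega)$ and $H^1(\Omega)$. For smooth $u,v$ the product rule gives $\nabla(uv)=v\nabla u+u\nabla v$. The term $u\nabla v$ is bounded in $L^2$ by $\|u\|_{L^\infty}\|\nabla v\|_{L^2}$.

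The term $v\nabla u$ is the main obstacle, since it pairs $\nabla u\in L^r$ with $v\in H^1$. By H\"older,
\[
\|v\nabla u\|_{L^2}\le \|\nabla u\|_{L^r}\|v\|_{L^{q}},\qquad \tfrac1q=\tfrac12-\tfrac1r ,
\]
so I need $H^1(\Omega)\hookrightarrow L^{q}(\Omega)$ with $q=2r/(r-2)$ (and $q=\infty$ when $r=2$). I would handle this by the Sobolev embedding theorem in three cases.
\begin{itemize}
\item If $n=1$, then $H^1\hookrightarrow L^\infty$ and every $q$ works.
\item If $n=2$, then $r>2$, so $q<\infty$, and $H^1\hookrightarrow L^q$ for all finite $q$.
\item If $n\ge3$, then $H^1\hookrightarrow L^{2n/(n-2)}$. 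The required condition $q\le 2n/(n-2)$ is equivalent to $\tfrac12-\tfrac1r\ge \tfrac12-\tfrac1n$, that is, $r\ge n$, which holds.
\end{itemize}
The case $r=2$ forces $n=1$ (since $r>n$) and is covered by the first case. The product map is therefore bilinear and bounded $W^{1,r}\times H^1\to H^1$ on smooth functions. I would then extend it by density. Along smooth approximations, $u_kv_k\to uv$ in $L^2$, and the products $u_kv_k$ are Cauchy in $H^1$, so the weak gradient of $uv$ is the $L^2$-limit $v\nabla u+u\nabla v$.

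For item (ii), I would apply H\"older directly:
\[
\|uv\|_{L^{2\alpha}}\le \|u\|_{L^{\alpha r}}\|v\|_{L^{s}},\qquad \tfrac1{s}=\tfrac1{2\alpha}-\tfrac1{\alpha r}=\tfrac{r-2}{2\alpha r}.
\]
This gives $s=2\alpha r/(r-2)$, which is at most $2r/(r-2)=q$ because $\alpha\le1$. By item (i)'s embedding discussion, $H^1\hookrightarrow L^q\hookrightarrow L^s$, the last inclusion holding because $\Omega$ is bounded. The endpoint $r=2$ gives $s=\infty$, which is fine since then $n=1$. The condition $\alpha\ge1/2$ only serves to make $2\alpha\ge1$, so that the target is a genuine Lebesgue space and H\"older applies with exponents at least $1$. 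The one place needing care is the exponent bookkeeping in the Sobolev embedding case analysis, which I have identified as the crux above.
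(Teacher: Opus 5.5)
Your proposal is correct and follows essentially the paper's route: a three-case Sobolev embedding analysis ($n=1$, $n=2$, $n\ge 3$) combined with H\"older to control $v\nabla u$ in (i), and H\"older with the same embeddings in (ii). Your density argument also justifies that the weak gradient of $uv$ is $u\nabla v+v\nabla u$, which the paper asserts without proof. In addition, reducing (ii) to the single exponent $q=2r/(r-2)$ makes the bookkeeping cleaner than the paper's monotonicity-in-$\alpha$ computation.
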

\begin{proof}
    See Appendix \ref{appendix:proof_properties_sobolev}.
\end{proof}
We make the following assumptions on the nonlinear term $f\colon [0,T]\times\Omega\times \R\to\R$ in the PDE \eqref{eq:semilinear_PDE}.
\begin{assumption}
\label{assump:locally_bounded_f}
    For almost all $(t,x) \in [0,T]\times \Omega$,\, $y\mapsto f(t,x,y)$ is continuous on $\R$. Moreover, the function $f$ is essentially bounded in $(t,x)$  and locally bounded in $y$, i.e., for any compact set $K\in\R$, there exists $M_K\geq0$ such that
    \begin{align}
    \label{eq:boundedness_f}
       \esssup_{(t,x)\in [0,T]\times \Omega} \,\, \sup_{y\in K}
\left\lvert f(t,x,y) \right\rvert \le M_K,
    \end{align}
    where $\esssup$ denotes the essential supremum.
\end{assumption}
\begin{assumption}
\label{assump:OSL_f}
 The function $f$ is locally one-sided Lipschitz in the variable $y$ essentially in $(t,x)$, i.e., for any compact set $K\subset \R$, there exists $L_K\geq0$ such that for almost all $(t,x)\in[0,T]\times\Omega$ and any $(u,v)\in K^2$,
\begin{equation}
\label{eq:OSL_f}
 (u-v)(f(t,x,u)-f(t,x,v)) \leq L_K(u-v)^2.
\end{equation}
\end{assumption}
\begin{remark}
Note that Assumption \ref{assump:locally_bounded_f} does not imply Assumption \ref{assump:OSL_f}. Take for instance $f(t,x,y) = \sqrt{\left\lvert y \right\rvert}$, which is continuous and satisfies \eqref{eq:boundedness_f}, but is not one-sided Lipschitz in the neighborhood of 0. Conversely, Assumption \ref{assump:OSL_f} does not imply Assumption \ref{assump:locally_bounded_f} since $f(t,x,y) = 1/(t-T/2)$ satisfies \eqref{eq:OSL_f} but not \eqref{eq:boundedness_f}.
\end{remark}
\begin{remark}
   Notice that Assumptions \ref{assump:locally_bounded_f} and \ref{assump:OSL_f} are not particularly restrictive and cover a large class of nonlinearities. For example, every nonlinearity of the form $f(t,x,y) = g(y)$, where $g \in C^1(\R)$, satisfies these conditions. This includes many reaction terms commonly arising in reaction-diffusion models, such as polynomial and exponential nonlinearities. The qualitative behavior of solutions then depends on the precise structure of $f$, the spatial dimension $n$, and the initial condition $y_0$, and may exhibit a wide range of phenomena, including global existence and boundedness, convergence to steady states, or, in some regimes, finite-time blow-up. We refer to \cite{quittner_superlinear_2007} for an extensive study of these phenomena.
\end{remark}

In this work, we are interested in weak solutions of the PDE \eqref{eq:semilinear_PDE} living in the following space of functions:
\begin{align*}
    X_r &\coloneq  H^1(0,T;W^{1,r}_0(\Omega))= \Bigl \{ y\in  L^2(0,T;W^{1,r}_0(\Omega)) \,\text{ s.t. }\, \partial_t y\in L^2(0,T;W^{1,r}_0(\Omega)) \Bigr \}.
\end{align*}
\begin{remark}
\label{remark:continuous_embedding_Xr}
We know from \cite[Theorem 2, 5.9.2]{evans_pde} that any $\tilde{y} \in X_r$ admits a continuous-in-time representative $\hat{y} \in C([0,T];W^{1,r}_0(\Omega))$. The continuous embedding $W^{1,r}_0(\Omega)\hookrightarrow C(\overline{\Omega})$ then shows that $\hat y$ may also be regarded as an element of $C([0,T];C(\overline{\Omega}))$, and since $[0,T]\times\overline\Omega$ is compact, we can make the isometric identification $C([0,T]; C(\overline{\Omega})) \cong C([0,T]\times\overline{\Omega})$. Therefore, each element of $X_r$ admits a unique continuous representative on $[0,T]\times\overline{\Omega}$, which is in particular bounded. In what follows, we identify elements of $X_r$ with their representatives in $C([0,T]\times\overline{\Omega})$.
\end{remark}
In what comes next, we shall frequently use the square of functions belonging to $X_r$. The following elementary stability property ensures that this operation is admissible in $X_r$ and that the usual product rules remain valid in both space and time.
\begin{proposition}
\label{prop:product_in_Xr}
    For any $y \in X_r$, one has $y^2 \in X_r$. In particular, the product rule in space holds in $(L^r(\Omega))^n$ for any $t \in [0,T]$
\begin{equation}
\label{eq:product_rule_space_Xr}
\nabla_x(y(t,\cdot)^2) = 2y(t,\cdot) \nabla_x y(t,\cdot),
\end{equation}
and the product rule in time holds in $L^2(0,T;W^{1,r}_0(\Omega))$
\begin{equation}
\label{eq:product_rule_time_Xr}
\partial_t(y^2) = 2y \partial_t y.
\end{equation}
\end{proposition}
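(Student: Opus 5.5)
The plan is to use the Banach algebra structure of $W^{1,r}_0(\Omega)$ (continuous bilinear product $W^{1,r}_0\times W^{1,r}_0\to W^{1,r}_0$ with $\|uv\|_{W^{1,r}}\le C\|u\|_{W^{1,r}}\|v\|_{W^{1,r}}$) together with the continuous-in-time representative from Remark \ref{remark:continuous_embedding_Xr}. The spatial product rule \eqref{eq:product_rule_space_Xr} comes first. For fixed $t$, $y(t,\cdot)\in W^{1,r}_0(\Omega)$. I will take a sequence $\varphi_k\in C_c^\infty(\Omega)$ with $\varphi_k\to y(t,\cdot)$ in $W^{1,r}$. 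For $\varphi_k$ the identity $\nabla(\varphi_k^2)=2\varphi_k\nabla\varphi_k$ is classical. Passing to the limit, $\varphi_k^2\to y(t,\cdot)^2$ in $W^{1,r}$ by continuity of the product. Also $2\varphi_k\nabla\varphi_k\to 2y\nabla y$ in $(L^r)^n$, because $\varphi_k\to y$ uniformly by \eqref{eq:continuous_embedding_W_1r} and $\nabla\varphi_k\to\nabla y$ in $L^r$. This gives \eqref{eq:product_rule_space_Xr}, and it also shows $y(t,\cdot)^2\in W^{1,r}_0(\Omega)$, since it is a limit of $C_c^\infty$ functions.

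Next I will show $y^2\in L^2(0,T;W^{1,r}_0(\Omega))$. Measurability holds because $t\mapsto y(t,\cdot)$ is continuous into $W^{1,r}_0$ and squaring is continuous there, so $t\mapsto y(t,\cdot)^2$ is continuous, hence strongly measurable. The bound is $\|y^2(t)\|_{W^{1,r}}\le C\|y(t)\|_{W^{1,r}}^2\le C\sup_s\|y(s)\|^2_{W^{1,r}}<\infty$, so $y^2$ even lies in $L^\infty(0,T;W^{1,r}_0)$. By the same estimate, $2y\partial_t y$ satisfies $\|2y\partial_t y(t)\|\le 2C\sup_s\|y(s)\|\,\|\partial_t y(t)\|$. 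The right-hand side is in $L^2(0,T)$, so $2y\partial_t y\in L^2(0,T;W^{1,r}_0)$.

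The main step is the temporal product rule \eqref{eq:product_rule_time_Xr}, i.e.\ identifying the weak time derivative of $y^2$. I would approximate $y$ in $H^1(0,T;W^{1,r}_0)$ by smooth-in-time functions $y_k\in C^1([0,T];W^{1,r}_0)$, using the standard density (extension and mollification in $t$, as in \cite[5.9.2]{evans_pde}). For these, the derivative of $y_k^2$ follows from the difference quotient
\[
\tfrac{1}{h}\bigl(y_k(t+h)^2-y_k(t)^2\bigr)=\tfrac1h\bigl(y_k(t+h)-y_k(t)\bigr)\bigl(y_k(t+h)+y_k(t)\bigr),
\]
whose limit is $2y_k\partial_t y_k$ in $W^{1,r}$ by continuity of the product. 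To pass to the limit $k\to\infty$, I need $y_k\to y$ in $C([0,T];W^{1,r}_0)$, which holds by the embedding $H^1(0,T;B)\hookrightarrow C([0,T];B)$, and $\partial_t y_k\to\partial_t y$ in $L^2(0,T;W^{1,r}_0)$. Then $y_k^2\to y^2$ and $2y_k\partial_t y_k\to 2y\partial_t y$ in $L^2(0,T;W^{1,r}_0)$, using the bilinear estimate split as $(y_k-y)\partial_t y_k + y(\partial_t y_k-\partial_t y)$. Finally, the identity $\int_0^T y_k^2\phi'\,dt=-\int_0^T 2y_k\partial_t y_k\,\phi\,dt$ for $\phi\in C_c^\infty(0,T)$ passes to the limit. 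This yields $\partial_t(y^2)=2y\partial_t y\in L^2(0,T;W^{1,r}_0)$, and hence $y^2\in X_r$.
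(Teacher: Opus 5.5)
Your proof is correct and follows the same route as the paper. Both rely on the Banach-algebra property of $W^{1,r}_0(\Omega)$ combined with the continuous-in-time representative $y\in C([0,T];W^{1,r}_0(\Omega))$. The paper simply asserts both product rules once $y^2\in C([0,T];W^{1,r}_0(\Omega))$ and $y\,\partial_t y\in L^2(0,T;W^{1,r}_0(\Omega))$ are established. Your $C_c^\infty$-density argument in space and your $C^1$-in-time approximation supply the justification that the paper leaves implicit; for the latter, local-in-time convergence on $\mathrm{supp}\,\phi$ already suffices.
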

\begin{proof}
Let $y \in X_r \subset C([0,T];W^{1,r}_0(\Omega))$ (see Remark \ref{remark:continuous_embedding_Xr}). Since $W^{1,r}_0(\Omega)$ is a Banach algebra, it follows that $y^2 \in C([0,T];W^{1,r}_0(\Omega))$. In particular, $y(t,\cdot)^2 \in W^{1,r}_0(\Omega)$ for every $t \in [0,T]$, and the product rule in space \eqref{eq:product_rule_space_Xr} holds.

It remains to prove the time regularity of $y^2$. But since $y \in C([0,T]; W^{1,r}_0(\Omega))$ and $\partial_t y \in L^2(0,T;W^{1,r}_0(\Omega))$, one can use again the fact that $W^{1,r}_0(\Omega)$ is a Banach algebra to show that $y\partial_t y \in L^2(0,T;W^{1,r}_0(\Omega))$. Therefore, $y^2 \in H^1(0,T;W^{1,r}_0(\Omega)) = X_r$ and the weak time derivative of $y^2$ is given by \eqref{eq:product_rule_time_Xr}.
\end{proof}
We now introduce the notion of weak solution that will be used in this work for the PDE \eqref{eq:semilinear_PDE}.
\begin{definition}[Weak solution]
\label{def:weak_solutions}
We say that $y^* \in X_r$ is a weak solution to the PDE \eqref{eq:semilinear_PDE} if it satisfies for any $v \in H^1_0(\Omega)$ and any $\varphi \in C^\infty_c((0,T))$,
\begin{subequations}
\begin{gather}
\int_0^T \int_\Omega \varphi(t) \Bigl[ v(x) \partial_ty^*(t,x) + \nabla_x v(x) \cdot \nabla_x y^*(t,x) - v(x) f(t,x,y^*(t,x)) \Bigr] \ud x \ud t = 0, \label{eq:weak_solution}\\
y^*(0,x) = y_0(x), \quad x \in \Omega. \label{eq:initial_condition_weak_solution}
\end{gather}
\end{subequations}
\end{definition}
\begin{remark}
Notice that \eqref{eq:weak_solution} is equivalent to requiring that
for any $v \in H^1_0(\Omega)$ and for almost all $t\in[0,T]$,
\begin{equation}
\label{eq:weak_solution_ae_t}
    \int_\Omega  v(x) \partial_ty^*(t,x) + \nabla_x v(x) \cdot \nabla_x y^*(t,x) - v(x) f(t,x,y^*(t,x)) \ud x = 0.
\end{equation}
Since $y^* \in X_r \subset C([0,T]\times\overline{\Omega})$, it follows that $y^*$ is bounded on $[0,T]\times\overline{\Omega}$. By Assumption~\ref{assump:locally_bounded_f} on $f$, this in turn implies that the mapping $(t,x)\mapsto f(t,x,y^*(t,x))$ belongs to $L^\infty((0,T)\times\Omega)$. Moreover, since $\partial_t y^*(t,\cdot)$ and $\nabla_x y^*(t,\cdot)$ belong to $L^r(\Omega)\subset L^2(\Omega)$ for almost all $t\in[0,T]$, every term appearing in \eqref{eq:weak_solution} and \eqref{eq:weak_solution_ae_t} is integrable for any $v \in H^1_0(\Omega)$. Consequently, both expressions are well defined.
\end{remark}
With this notion of weak solution, we establish the following uniqueness result.
\begin{proposition}[Uniqueness of weak solutions]
\label{prop:uniqueness_weak_solution}
The PDE \eqref{eq:semilinear_PDE} admits at most one weak solution $y^* \in X_r$.
\end{proposition}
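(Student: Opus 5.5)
We argue by a standard energy (Grönwall) estimate on the difference of two solutions. Let $y_1,y_2\in X_r$ be two weak solutions of \eqref{eq:semilinear_PDE} in the sense of Definition~\ref{def:weak_solutions}, and set $w\coloneq y_1-y_2\in X_r$. By Remark~\ref{remark:continuous_embedding_Xr}, both $y_1$ and $y_2$ are continuous on the compact set $[0,T]\times\overline\Omega$, hence bounded. We may therefore fix a compact interval $K\subset\R$ containing the ranges of $y_1$ and $y_2$, and let $L_K$ be the one-sided Lipschitz constant from Assumption~\ref{assump:OSL_f}. Subtracting the two identities \eqref{eq:weak_solution_ae_t}, we obtain, for every $v\in H^1_0(\Omega)$ and almost every $t$,
\begin{equation*}
\int_\Omega v\,\partial_t w+\nabla_x v\cdot\nabla_x w-v\bigl(f(t,x,y_1)-f(t,x,y_2)\bigr)\ud x=0 .
\end{equation*}

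The key step is to test this with $v=w(t,\cdot)$, which is admissible because $w(t,\cdot)\in W^{1,r}_0(\Omega)\subset H^1_0(\Omega)$ for every $t$. This gives, for a.e.\ $t$,
\begin{equation*}
\int_\Omega w\,\partial_t w\ud x+\int_\Omega|\nabla_x w|^2\ud x=\int_\Omega w\bigl(f(t,x,y_1)-f(t,x,y_2)\bigr)\ud x\le L_K\int_\Omega w^2\ud x ,
\end{equation*}
where the inequality is \eqref{eq:OSL_f} applied pointwise; this is legitimate since $y_1(t,x),y_2(t,x)\in K$.

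Next we identify the first term as a time derivative, using Proposition~\ref{prop:product_in_Xr}. Since $w^2\in X_r$ and $\partial_t(w^2)=2w\,\partial_t w$, the function $E(t)\coloneq\int_\Omega w(t,x)^2\ud x$ is absolutely continuous on $[0,T]$. To justify this, note that integration over $\Omega$ is a continuous linear functional on $W^{1,r}_0(\Omega)$, so it commutes with the Bochner time derivative; hence $E\in H^1(0,T)$ with $E'(t)=2\int_\Omega w\,\partial_t w\ud x$ for a.e.\ $t$. Dropping the nonnegative gradient term, we arrive at
\begin{equation*}
E'(t)\le 2L_K\,E(t)\quad\text{for a.e. } t .
\end{equation*}

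Finally, the initial condition \eqref{eq:initial_condition_weak_solution} gives $w(0,\cdot)=0$, so $E(0)=0$. Grönwall's lemma in its absolutely continuous form then yields $E\equiv0$ on $[0,T]$. Thus $w=0$ almost everywhere, and by continuity $y_1=y_2$ everywhere on $[0,T]\times\overline\Omega$.

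The main technical point is the justification of the identity $E'=2\int_\Omega w\,\partial_t w$. This is exactly what Proposition~\ref{prop:product_in_Xr} provides, combined with the continuity of the linear functional $u\mapsto\int_\Omega u\ud x$ on $W^{1,r}_0(\Omega)$. The remaining steps are routine.
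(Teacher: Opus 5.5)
Your proposal is correct and follows essentially the paper's argument: test the difference of the weak formulations with $y_1-y_2$, bound the reaction term by the one-sided Lipschitz constant, and apply Grönwall from zero initial data. The only differences are minor. You drop the gradient term outright where the paper (unnecessarily) invokes Poincaré, and you justify $E'(t)=2\int_\Omega w\,\partial_t w\ud x$ via Proposition~\ref{prop:product_in_Xr} and a bounded linear functional rather than by citing Evans; both variants are valid.
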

\begin{proof}
See Appendix \ref{appendix:proof_uniqueness_weak_solutions}.
\end{proof}
The previous proposition ensures that any weak solution, if it exists, is uniquely determined. In order to work with a well-defined solution throughout the remainder of the paper, we  make the following standing assumption.
\begin{assumption}[Local-in-time existence of weak solutions]
\label{assumption:existence_weak_solution}
For the given time horizon $T>0$, the PDE \eqref{eq:semilinear_PDE} admits a weak solution $y^* \in X_r$.
\end{assumption}
 By Proposition \ref{prop:uniqueness_weak_solution}, the solution to \eqref{eq:semilinear_PDE} is unique and we denote it as $y^*$. Furthermore, since $y^* \in X_r$, Remark \ref{remark:continuous_embedding_Xr} implies that $y^* \in C([0,T]\times\overline{\Omega})$. Therefore, the range of $y^*$ is compact and we denote by $\bm{Y}$ any compact subset of $\R$ containing the range:
\begin{equation}
\label{eq:compact_Y}
    \bm{Y} \supset y^*([0,T]\times\overline{\Omega}).
\end{equation}
\begin{remark}
To the best of our knowledge, no general local-in-time existence theorem for weak solutions under minimal assumptions on the domain $\Omega$ and on the nonlinearity $f$ is available in a form directly applicable to the present setting. Nevertheless, related results can be found in more regular settings: for instance, \cite[Proposition 7.3.2]{local_in_time_existence_strong_solution} proves the existence of a local-in-time classical solution to \eqref{eq:semilinear_PDE} when $\Omega$ has $C^2$ boundary.
\end{remark}
In the next proposition, we derive two variational identities satisfied by the weak solution $y^*$ to the PDE \eqref{eq:semilinear_PDE}. These identities involve $(y^*)^2$ and will be of particular interest later in Section~\ref{section:measure_valued_solutions} where they are used to prove the concentration of measure-valued solutions on $y^*$.
\begin{proposition}[Energy identities for weak solution]
\label{prop:energy_identity_for_weak_solutions}
Suppose that $y^*\in X_r$ is a weak solution to the PDE \eqref{eq:semilinear_PDE}. Then, it satisfies the following energy identities for any $v\in H^1_0(\Omega)$ and any $\varphi \in C^\infty_c((0,T))$:
\begin{multline}
\label{eq:weak_formulation_energy}
    \int_0^T \!\int_\Omega \varphi(t) \Bigl[
        v(x)\partial_t\bigl((y^*)^2\bigr)(t,x)
        + \nabla_x v(x) \cdot \nabla_x\bigl((y^*)^2\bigr)(t,x)
        - 2v(x)y^*(t,x)f(t,x,y^*(t,x))
        \\
        + 2v(x)\left\lVert \nabla_x y^*(t,x) \right\rVert^2
    \Bigr] \ud x \ud t = 0,
\end{multline}
\begin{equation}
\label{eq:weak_formulation_dissipation}
    \int_0^T \!\int_\Omega
        \varphi(t) (\partial_t y^*(t,x))^2
        - \frac{1}{2}\varphi'(t) \left\lVert \nabla_x y^*(t,x) \right\rVert^2
        - \varphi(t)\partial_t y^*(t,x) f(t,x,y^*(t,x)) \ud x \ud t = 0.
\end{equation}
\end{proposition}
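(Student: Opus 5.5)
Both identities come from testing the weak formulation \eqref{eq:weak_solution_ae_t} with suitable admissible test functions. I will work at a.e.\ fixed $t$ and then integrate against $\varphi$.

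\textbf{First identity \eqref{eq:weak_formulation_energy}.} Fix $v\in H^1_0(\Omega)$ and take the test function $w(t,\cdot)=2v\,y^*(t,\cdot)$. By Proposition \ref{prop:properties_sobolev_W1r}, with $u=y^*(t,\cdot)\in W^{1,r}_0(\Omega)$, we get $w(t,\cdot)\in H^1(\Omega)$. Its trace vanishes because $v$ has zero trace and $y^*(t,\cdot)$ is continuous up to the boundary; equivalently, approximate $v$ by $C_c^\infty$ functions and use continuity of the product map. Hence $w(t,\cdot)\in H^1_0(\Omega)$. The product rule gives $\nabla_x w=2y^*\nabla_x v+2v\nabla_x y^*$. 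Plugging this into \eqref{eq:weak_solution_ae_t} yields, for a.e.\ $t$,
\[
\int_\Omega 2v y^*\partial_t y^* + 2y^*\nabla_x v\cdot\nabla_x y^* + 2v\lVert\nabla_x y^*\rVert^2 - 2vy^*f(t,x,y^*)\ud x=0.
\]
Proposition \ref{prop:product_in_Xr} gives $2y^*\partial_t y^*=\partial_t((y^*)^2)$ and $2y^*\nabla_x y^*=\nabla_x((y^*)^2)$. Multiplying by $\varphi(t)$ and integrating over $(0,T)$ gives \eqref{eq:weak_formulation_energy}. Integrability holds because $y^*$ and $f(\cdot,\cdot,y^*)$ are bounded, $\nabla_x y^*\in L^2(0,T;L^r)$ and $r\ge2$.

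\textbf{Second identity \eqref{eq:weak_formulation_dissipation}.} Take the test function $v=\varphi(t)\partial_t y^*(t,\cdot)$, which lies in $W^{1,r}_0(\Omega)\subset H^1_0(\Omega)$ for a.e.\ $t$. This gives, for a.e.\ $t$,
\[
\int_\Omega \varphi(\partial_t y^*)^2+\varphi\nabla_x\partial_t y^*\cdot\nabla_x y^*-\varphi\,\partial_t y^* f\ud x=0.
\]
Integrability in time follows from $\partial_t y^*\in L^2(0,T;W^{1,r}_0)$ and $y^*\in C([0,T];W^{1,r}_0)$. The remaining step is to rewrite the middle term. The map $t\mapsto\nabla_x y^*(t,\cdot)$ belongs to $H^1(0,T;(L^2(\Omega))^n)$, with time derivative $\nabla_x\partial_t y^*$, since $\nabla_x$ is bounded linear from $W^{1,r}_0$ to $(L^r)^n\subset (L^2)^n$. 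For $H^1(0,T;H)$-valued functions with $H$ a Hilbert space, the chain rule gives that $t\mapsto\lVert\nabla_x y^*(t)\rVert_{L^2}^2$ is absolutely continuous, with derivative $2\int_\Omega\nabla_x\partial_t y^*\cdot\nabla_x y^*\ud x$ a.e. This follows by density of smooth functions in $H^1(0,T;H)$, as in \cite[5.9.2]{evans_pde}. Integrating by parts in $t$ against $\varphi\in C_c^\infty((0,T))$ (no boundary terms) turns the middle term into $-\frac12\int_0^T\varphi'(t)\int_\Omega\lVert\nabla_x y^*\rVert^2\ud x\ud t$, which gives \eqref{eq:weak_formulation_dissipation}.

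\textbf{Main obstacle.} The delicate points are justifying the Hilbert-valued chain rule for $\lVert\nabla_x y^*\rVert_{L^2}^2$, and checking that $vy^*\in H^1_0$ rather than merely $H^1$. For the latter I would argue by density: if $v_k\in C_c^\infty(\Omega)$ with $v_k\to v$ in $H^1$, then $v_ky^*\in H^1_0$ and $v_ky^*\to vy^*$ in $H^1$. The convergence uses the estimate underlying Proposition \ref{prop:properties_sobolev_W1r}, namely $\lVert uv\rVert_{H^1}\lesssim\lVert u\rVert_{W^{1,r}}\lVert v\rVert_{H^1}$.
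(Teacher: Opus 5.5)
Your proposal is correct and follows essentially the same route as the paper's proof: you test \eqref{eq:weak_solution_ae_t} with $v\,y^*(t,\cdot)$ and with $\partial_t y^*(t,\cdot)$, use the product rules of Proposition~\ref{prop:product_in_Xr}, identify $\partial_t(\nabla_x y^*)=\nabla_x(\partial_t y^*)$ through boundedness of $\nabla_x$, and apply the Hilbert-valued chain rule for $\lVert\nabla_x y^*(t,\cdot)\rVert_{L^2}^2$ (the paper cites the Lions--Magenes lemma here) before integrating by parts in time. Your density argument showing that $v\,y^*(t,\cdot)$ lies in $H^1_0(\Omega)$, and not merely in $H^1(\Omega)$, fills a small step that the paper asserts directly from Proposition~\ref{prop:properties_sobolev_W1r}, which by itself only gives $H^1$ membership.
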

\begin{proof}
See Appendix \ref{appendix:proof_energy_identity_for_weak_solutions}.
\end{proof}
\begin{remark}
Notice that \eqref{eq:weak_formulation_energy} is equivalent to requiring that for any $v\in H^1_0(\Omega)$ and for almost all $t\in[0,T]$,
\begin{multline}
\label{eq:weak_formulation_energy_ae_t}
    \int_\Omega 
        v(x)\partial_t\bigl((y^*)^2\bigr)(t,x)
        + \nabla_x v(x) \cdot \nabla_x\bigl((y^*)^2\bigr)(t,x)
        - 2v(x)y^*(t,x)f(t,x,y^*(t,x))
        \\
        + 2v(x)\left\lVert \nabla_x y^*(t,x) \right\rVert^2
    \ud x = 0.
\end{multline}
\end{remark}
\begin{remark}
Let us observe that identities \eqref{eq:weak_formulation_energy} and \eqref{eq:weak_formulation_dissipation} are obtained by testing the weak formulation \eqref{eq:weak_solution} with $v^1_t(x) = v(x)y^*(t,x)$ and $v^2_t(x) = \partial_t y^*(t,x)$, respectively. More generally, one can formally derive a broader class of identities by
testing \eqref{eq:weak_solution} with functions of the form
\begin{equation*}
    v_t(x) = p_1(x,y^*(t,x)) + p_2(x,y^*(t,x))y^*(t,x) +  p_3(x,y^*(t,x))\partial_t y^*(t,x),
\end{equation*}
where $p_1,p_2,p_3\in C^1(\overline{\Omega}\times\bm{Y})$ and
$p_1(x,y)=0$ on $\partial\Omega$ for every $y\in\bm{Y}$. Although these additional energy identities are not needed to prove the concentration result in Theorem~\ref{theorem:concentration_emv_solutions}, they may provide additional constraints when considering finite-dimensional relaxations of the moment problem associated with measure-valued solutions of the PDE. For instance, when $p$ is restricted to polynomials up to some degree $d$, one recovers the formulation of \cite{KHL-2018}, supplemented by constraints involving second-order moments of the time derivative of the solution.
\end{remark}

\section{Measure-valued solutions}
\label{section:measure_valued_solutions}

In this section, building on the work of DiPerna \cite{DiPerna_mv_solutions}, we introduce a notion of measure-valued solutions that extends the notion of weak solution to the PDE \eqref{eq:semilinear_PDE}. In this setting, the unknown is represented by a time- and space-dependent Young measure, rather than by a single pointwise state.

Since our objective is to show that measure-valued solutions concentrate on the solution of the original PDE \eqref{eq:semilinear_PDE}, the class of admissible measure-valued solutions must contain, at least, the Dirac measure
\begin{equation*}
    \delta^* \coloneq \delta_{\left(y^*(t,x),\partial_t y^*(t,x),\nabla_x y^*(t,x)\right)}
\end{equation*}
generated by the weak solution $y^*$ and its time and space derivatives. Consequently, any admissibility condition imposed at the measure-valued level has to correspond to a property already satisfied by $y^*$ once it is embedded into the measure-valued framework.

The set of compatible conditions considered in this section consists of an integrability condition, integration-by-parts identities, and first- and second-order moment constraints, introduced respectively in Definitions~\ref{def:young_measures_finite_p_moments}, \ref{def:IBP_formula} and \ref{def:emv_solutions} below. The main result of this work, Theorem~\ref{theorem:concentration_emv_solutions}, shows that these constraints are sufficient to establish the concentration of measure-valued solutions on $y^*$ and its first order derivatives.

We begin by recalling the notion of Young measures, also referred to as slicing Young measures or parametrized measures; see, for instance, \cite[Section~4.3.2]{attouch_variational_2006}.
\begin{definition}[Young measure]
\label{def:Young_measures}
Let $N\geq1$ and $\bm{E}\subset\R^N$. A \emph{Young measure} on $\bm{E}$ parametrized by $[0,T]\times\Omega$ is a map $\mu \colon [0,T]\times\Omega \to \mathcal{P}(\bm{E}), \,(t,x) \mapsto \mu_{(t,x)},$ such that for all $g \in C_b(\bm{E})$, the map
\begin{equation*}
\displaystyle (t,x) \mapsto \inner{\mutx}{g} \coloneq \int_{\bm{E}} g(\xi_1,\ldots,\xi_N) \ud \mu_{(t,x)} (\xi), \qquad \xi=(\xi_1,\ldots,\xi_N),
\end{equation*}
is measurable. We denote by $\mathcal{Y}([0,T]\times\Omega;\bm{E})$ the set of Young measures on $\bm{E}$ parametrized by $[0,T]\times\Omega$.
\end{definition}
Observe that the boundedness assumption on the function $g$ in Definition \ref{def:Young_measures} ensures that the pairing $\inner{\mutx}{g}$ is always well defined. However, as will become apparent later in the paper, this requirement is too restrictive when one wishes to consider functions with polynomial growth (for instance the monomials $g(\xi)=\left\lvert\xi_i\right\rvert^p$) on an unbounded set $\bm{E}\subset\mathbb{R}^N$. In such cases, the quantity $\inner{\mutx}{g}$ need not be finite a priori, nor need the corresponding moments of the measure $\mutx$. This remark motivates the following definition.
\begin{definition}[Young measure with finite $p$-moments]
\label{def:young_measures_finite_p_moments}
Let $p\geq1$. A Young measure $\mu \in \mathcal{Y}([0,T]\times\Omega;\bm{E})$ has \emph{finite $p$-moments} if $$\left( (t,x) \mapsto \int_{\bm{E}} \left\lVert \xi \right\rVert^p \ud \mutx(\xi) \right) \in L^1((0,T)\times\Omega).$$ In particular, this implies that
\begin{equation}
\label{eq:finite_p_moments_definition}
    \int_0^T\int_{\Omega} \left( \int_{\bm{E}} \left\lVert \xi \right\rVert^p \ud \mutx(\xi) \right) \ud x \ud t < \infty.
\end{equation}
We denote by $\mathcal{Y}^p([0,T]\times\Omega;\bm{E})$ the set of Young measures with finite $p$-moments.
\end{definition}
This notion of Young measures with finite $p$-moments is standard and is closely related---in fact equivalent, see \cite[Theorem 1]{finite_p_moments_reference}---to the notion of $L^p$-Young measures introduced in \cite{Lp_young_measures}. The latter are generated by bounded sequences in $L^p$ spaces and capture the limiting oscillatory behavior of such sequences.

In the context of this work, the relevance of Young measures with finite $p$-moments is that the defining measurability property can be extended from $C_b(\bm{E})$ to continuous functions with at most polynomial growth of order $p$, i.e., the functions $g$ such that
\begin{equation*}
\left\lvert g(\xi) \right\rvert \leq C_g\left(1 + \left\lVert \xi \right\rVert^p \right), \quad \forall \xi \in \bm{E},
\end{equation*}
for some constant $C_g \geq 0$. Furthermore, let us note that the control of the higher-order moments of $\mutx$ automatically yields control of all lower-order moments. More precisely, any Young measure with finite $p$-moments also has finite $q$-moments for any $q \in [1,p]$.

The following proposition shows that these lower-order moments inherit the integrability property \eqref{eq:finite_p_moments_definition} with the expected corresponding integrability exponent.
\begin{proposition}
\label{prop:integrability_Young_measures}
Let $p\geq1$ and $\mu \in \mathcal{Y}^p([0,T]\times\Omega;\bm{E})$. Then, for any $k \in [1,p]$,
\begin{equation}
\inner{\mutx}{\left\lVert \xi \right\rVert^k} \in L^{p/k}((0,T)\times\Omega).
\end{equation}
\end{proposition}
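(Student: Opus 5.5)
The plan is to apply Jensen's inequality fiberwise, for each fixed $(t,x)$, and then integrate over $(0,T)\times\Omega$. The exponent $p/k$ is at least $1$ because $k\le p$.

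First, the map $(t,x)\mapsto \inner{\mutx}{\left\lVert \xi \right\rVert^k}$ is measurable. The function $\xi\mapsto\left\lVert\xi\right\rVert^k$ is continuous and nonnegative, but not bounded, so Definition~\ref{def:Young_measures} does not apply directly. I would truncate, setting $g_m(\xi)=\min(\left\lVert\xi\right\rVert^k,m)\in C_b(\bm{E})$. Each map $(t,x)\mapsto\inner{\mutx}{g_m}$ is measurable by Definition~\ref{def:Young_measures}. By monotone convergence applied for each $(t,x)$, these maps increase pointwise to $\inner{\mutx}{\left\lVert\xi\right\rVert^k}$, which takes values in $[0,\infty]$. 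A pointwise limit of measurable maps is measurable.

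Second, the pointwise bound. Fix $(t,x)$. Since $\mutx$ is a probability measure on $\bm{E}$ and $s\mapsto s^{p/k}$ is convex on $[0,\infty)$ (because $p/k\ge1$), Jensen's inequality, or equivalently Hölder's inequality with exponents $p/k$ and $(p/k)'$ against the constant function $1$, gives
\begin{equation*}
\Bigl(\int_{\bm{E}}\left\lVert\xi\right\rVert^k\ud\mutx(\xi)\Bigr)^{p/k}\le\int_{\bm{E}}\left\lVert\xi\right\rVert^p\ud\mutx(\xi).
\end{equation*}
This holds in $[0,\infty]$, so no finiteness is needed beforehand. If $k=p$ the inequality is trivial.

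Third, integrate this inequality over $(0,T)\times\Omega$. By Definition~\ref{def:young_measures_finite_p_moments}, the right-hand side is integrable, as expressed in \eqref{eq:finite_p_moments_definition}. Hence $\inner{\mutx}{\left\lVert\xi\right\rVert^k}^{p/k}\in L^1$. In particular $\inner{\mutx}{\left\lVert\xi\right\rVert^k}$ is finite almost everywhere and belongs to $L^{p/k}((0,T)\times\Omega)$, with
\begin{equation*}
\bigl\lVert\inner{\mutx}{\left\lVert\xi\right\rVert^k}\bigr\rVert_{L^{p/k}}^{p/k}\le\int_0^T\!\!\int_\Omega\inner{\mutx}{\left\lVert\xi\right\rVert^p}\ud x\ud t.
\end{equation*}

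The only real obstacle is the measurability step, since the moment function is unbounded. The truncation argument resolves it. The convexity step is routine.
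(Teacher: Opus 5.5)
Your proposal is correct and follows the paper's route: apply Jensen's inequality for the convex map $s\mapsto s^{p/k}$ at each $(t,x)$, then integrate over $(0,T)\times\Omega$ using the finite $p$-moment condition. Your truncation argument for the measurability of $(t,x)\mapsto\inner{\mutx}{\left\lVert\xi\right\rVert^k}$ is a detail the paper leaves implicit, and you handle it correctly.
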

\begin{proof}
Let $p\geq 1$, $\mu \in \mathcal{Y}^p([0,T]\times\Omega;\bm{E})$ and $k \in [1,p]$. Then,
\begin{equation*}
\int_0^T \int_\Omega \left\lvert \inner{\mutx}{\left\lVert \xi \right\rVert^k} \right\rvert^{p/k} \ud x \ud t = \int_0^T \int_\Omega \left ( \int_E \left\lVert \xi \right\rVert^k \ud \mutx(\xi) \right)^{p/k} \ud x \ud t.
\end{equation*}
Using Jensen's inequality applied to the function $x\mapsto x^{p/k}$ (convex on $[0,\infty)$ since $p \geq k$) gives
\begin{align*}
    \int_0^T \int_\Omega \left\lvert \inner{\mutx}{\left\lVert \xi \right\rVert^k} \right\rvert^{p/k} \ud x \ud t &\leq \int_0^T \int_\Omega \int_E \left(\left\lVert \xi \right\rVert^k\right)^{p/k} \ud \mutx(\xi) \ud x \ud t\\
    &= \int_0^T \int_\Omega \int_E \left\lVert \xi \right\rVert^p \ud \mutx(\xi) \ud x \ud t\\
    & < \infty \text{ since $\mu$ has finite $p$-moments}.
\end{align*}
Thus, one has $\inner{\mutx}{\left\lVert \xi \right\rVert^k} \in L^{p/k}((0,T)\times\Omega)$.
\end{proof}
In the remainder of this paper, we will consider Young measures on $\bm{E}= \bm{Y}\times\bm{Z}$, where $\bm{Y}\subset\R$ is the compact space introduced in \eqref{eq:compact_Y} which contains the values of the bounded weak solution $y^*$ of the PDE, and $\bm{Z} = \R\times\R^{n}$ represents the space containing the values of the time and space derivatives of the weak solution. Let $\mu\in\mathcal{Y}([0,T]\times\Omega;\bm{Y}\times\bm{Z})$. For any $g\in C_b(\bm{Y}\times\bm{Z})$, the integration of the measure $\mutx$ against $g$ writes 
\begin{equation}
\label{eq:integration_bracket_mu_g}
    \inner{\mutx}{g} \coloneq \int_{\bm{Y}\times\bm{Z}} g(y,z) \ud \mutx(y,z),
\end{equation}
where the integrated variables $y$ and $z = \left[ z_0  \enspace z_1 \enspace \cdots \enspace z_n\right]^T$ belong to $\bm{Y}$ and $\bm{Z}$, respectively. It will sometimes be useful to treat the variable $z_0$ (which represents the time derivative of $y$) and the variables $z_1,\ldots,z_n$ (which represent the space derivatives of $y$) separately; for this reason we introduce the notation $\overline{z} \coloneq \left[z_1 \enspace \cdots \enspace z_n\right]^T$ to represent the space part of the time-space gradient $z$.

Moreover, if $g$ is a vector-valued function, then the integral in \eqref{eq:integration_bracket_mu_g} is taken componentwise. For instance, if $g(y,z) = \overline{z}$, one should understand $\inner{\mutx}{g}$ in the following sense:
\begin{equation*}
    \inner{\mutx}{\overline{z}} = \left[ \int_{\bm{Y}\times\bm{Z}} z_1 \ud \mutx(y,z) \enspace \cdots \enspace \int_{\bm{Y}\times\bm{Z}} z_n \ud \mutx(y,z) \right]^T.
\end{equation*}

In the next definition, we formulate an integration-by-parts identity at the level of Young measures. This condition links the coordinate $y$, representing the solution, with the coordinates $z_i$, representing its time and space derivatives. In particular, it is satisfied by the Dirac measure $\delta^*$ generated by the weak solution $y^*$ and its first-order derivatives.
\begin{definition}[Integration-by-parts formula for Young measures]
\label{def:IBP_formula}
A Young measure $\mu \in \mathcal{Y}^1([0,T]\times\Omega;\bm{Y}\times\bm{Z})$ satisfies the \emph{integration-by-parts formula} if it verifies the following identities for all $\beta\in C^1(\bm{Y})$ and all $\psi \in C^\infty_c((0,T)\times\Omega)$:
\begin{subequations}
\begin{align}
\int_0^T \int_\Omega \partial_t \psi(t,x) \inner{\mutx}{\beta(y)} \ud x \ud t &= -\int_0^T \int_\Omega \psi(t,x) \inner{\mutx}{z_0\beta^{'}(y)}\ud x \ud t, \label{eq:IBP_in_time}\\
\int_0^T \int_\Omega \nabla_x \psi(t,x) \inner{\mutx}{\beta(y)}\ud x \ud t &= -\int_0^T \int_\Omega \psi(t,x) \inner{\mutx}{ \overline{z} \beta^{'}(y)}\ud x \ud t.\label{eq:IBP_in_space}
\end{align}
\end{subequations}
\end{definition}
The following proposition clarifies the role of the integration-by-parts formula. For Young measures with finite $p$-moments, it provides the appropriate weak differentiability structure between the variables $y$ and $z$.
\begin{proposition}
\label{prop:existence_of_weak_derivative_for_Young_measures}
Let $p\geq1$ and suppose that a Young measure $\mu \in \mathcal{Y}^p([0,T]\times\Omega;\bm{Y}\times\bm{Z})$ satisfies the integration-by-parts formula. Then, for any $\beta\in C^1(\bm{Y})$,
\begin{equation}
    \inner{\mutx}{\beta(y)} \in W^{1,p}((0,T)\times\Omega) \cap L^\infty((0,T)\times\Omega),
\end{equation}
with
\begin{subequations}
\begin{align}
    \partial_t \inner{\mutx}{\beta(y)} &= \inner{\mutx}{z_0\beta'(y)},\\
    \nabla_x \inner{\mutx}{\beta(y)} &= \inner{\mutx}{\overline{z}\beta'(y)}.
\end{align}
\end{subequations}
Moreover, if $p \geq r$ (where $r$ is the exponent introduced in \eqref{eq:definition_r}), then $\inner{\mutx}{\beta(y)}$ admits a continuous representative (still denoted $\inner{\mutx}{\beta(y)}$) in the following sense:
\begin{equation}
    \inner{\mutx}{\beta(y)} \in C([0,T];L^r(\Omega)) \cap L^2(0,T;C(\overline{\Omega})).
\end{equation}
\end{proposition}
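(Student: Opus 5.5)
The plan has three steps: boundedness of $\inner{\mutx}{\beta(y)}$, identification of its weak derivatives through the integration-by-parts formula, and a continuity statement from a Sobolev-type embedding once $p\geq r$.

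\textbf{Boundedness and weak derivatives.} Fix $\beta\in C^1(\bm{Y})$. Since $\bm{Y}$ is compact, $\beta$ and $\beta'$ are bounded on $\bm{Y}$, say by $C_\beta$. Each $\mutx$ is a probability measure, so $\lvert\inner{\mutx}{\beta(y)}\rvert\le C_\beta$ for every $(t,x)$. Together with measurability from Definition~\ref{def:Young_measures}, this gives $\inner{\mutx}{\beta(y)}\in L^\infty((0,T)\times\Omega)$, and hence membership in $L^p$, because the domain is bounded. For the candidate derivatives, note that $\lvert z_0\beta'(y)\rvert$ and $\lVert\overline z\,\beta'(y)\rVert$ are bounded by $C_\beta\lVert(y,z)\rVert$, which has linear growth. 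Measurability of the pairings follows from the extension of the measurability property to functions of polynomial growth; concretely, one truncates $g_k=\max(-k,\min(g,k))\in C_b$ and passes to the limit by monotone/dominated convergence. Proposition~\ref{prop:integrability_Young_measures} with $k=1$ then gives $\inner{\mutx}{\lVert\xi\rVert}\in L^p$, so both $\inner{\mutx}{z_0\beta'(y)}$ and $\inner{\mutx}{\overline z\beta'(y)}$ lie in $L^p$. The identities \eqref{eq:IBP_in_time} and \eqref{eq:IBP_in_space} are exactly the definition of the weak derivatives, tested against $\psi\in C_c^\infty((0,T)\times\Omega)$. This yields $W^{1,p}((0,T)\times\Omega)\cap L^\infty$ with the stated formulas.

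\textbf{Continuity for $p\ge r$.} Set $w=\inner{\mutx}{\beta(y)}$. Because $(0,T)\times\Omega$ is bounded and $p\ge r\ge 2$, we have $w\in W^{1,r}((0,T)\times\Omega)$. I would read this as two Bochner-space statements.

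First, $w\in L^r(0,T;L^r(\Omega))$ and $\partial_t w\in L^r(0,T;L^r(\Omega))$. The identification of the distributional time derivative with the Bochner weak derivative is standard: test with $\psi(t,x)=\varphi(t)v(x)$ and use density of such products. This gives $w\in W^{1,r}(0,T;L^r(\Omega))$, and \cite[Theorem 2, 5.9.2]{evans_pde} (or its $L^r$ analogue) then provides a representative in $C([0,T];L^r(\Omega))$.

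Second, by Fubini, for a.e.\ $t$ the slice $w(t,\cdot)$ lies in $W^{1,r}(\Omega)$ with gradient $\nabla_x w(t,\cdot)$, and $\int_0^T\lVert w(t,\cdot)\rVert^r_{W^{1,r}(\Omega)}\ud t<\infty$. The embedding \eqref{eq:continuous_embedding_W_1r} gives $\lVert w(t,\cdot)\rVert_{C(\overline\Omega)}\le C\lVert w(t,\cdot)\rVert_{W^{1,r}}$, so $w\in L^r(0,T;C(\overline\Omega))\subset L^2(0,T;C(\overline\Omega))$ since $r\ge2$.

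\textbf{Main obstacle.} The delicate point is the slicing argument: a function in $W^{1,r}$ of the product domain must have a.e.\ slices in $W^{1,r}(\Omega)$ with the correct gradient, and those slices must be strongly measurable as maps into $W^{1,r}(\Omega)$ and into $C(\overline\Omega)$. To handle it, I would test \eqref{eq:IBP_in_space} with $\psi=\varphi(t)v(x)$ and use a countable dense family of $v$. This gives, for a.e.\ $t$, the weak-gradient identity on $\Omega$. Strong measurability then follows from Pettis' theorem, since $W^{1,r}(\Omega)$ is separable. Both representatives agree a.e., so the single representative can be chosen in $C([0,T];L^r(\Omega))$.
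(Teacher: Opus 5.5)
Your proposal is correct and follows essentially the same route as the paper. It bounds $\inner{\mutx}{\beta(y)}$ using boundedness of $\beta$ on the compact set $\bm{Y}$, and puts $\inner{\mutx}{z_i\beta'(y)}$ in $L^p$ via Jensen and the finite $p$-moments, which you route through Proposition~\ref{prop:integrability_Young_measures} with $k=1$. It reads off the weak derivatives from the integration-by-parts formula, then slices into $W^{1,r}(0,T;L^r(\Omega))\cap L^r(0,T;W^{1,r}(\Omega))$ and concludes with Evans' theorem and the embedding \eqref{eq:continuous_embedding_W_1r}. Your extra care about measurability of the pairings and the slicing and strong measurability, which the paper handles with a one-line appeal to Fubini, fills in details rather than changing the argument.
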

\begin{proof}
Let $\mu \in \mathcal{Y}^p([0,T]\times\Omega;\bm{Y}\times\bm{Z})$ for a given $p \geq 1$ and suppose that $\mu$ satisfies the integration-by-parts formula. Let $\beta \in C^1(\bm{Y})$. In particular, since $\bm{Y}$ is compact, we have $\beta\in L^\infty(\bm{Y})$ and $\beta'\in L^\infty(\bm{Y})$. Using the fact that $\mutx$ is a probability measure, we obtain
\begin{equation*}
\left\lvert \inner{\mutx}{\beta(y)} \right\rvert \leq \int_{\bm{Y}\times\bm{Z}} \left\lvert \beta(y) \right\rvert \ud \mutx(y,z) \leq \left\lVert \beta \right\rVert_{L^\infty(\bm{Y})} \int_{\bm{Y}\times\bm{Z}}1\ud \mutx(y,z) = \left\lVert \beta \right\rVert_{L^\infty(\bm{Y})}. 
\end{equation*}
Thus, $\inner{\mutx}{\beta(y)} \in L^\infty((0,T)\times\Omega)$. Now, for any $i \in \{0,1,\ldots,n\}$, one gets
\begin{align*}
\int_0^T \int_\Omega \left\lvert \inner{\mutx}{z_i\beta'(y)} \right\rvert^p \ud x \ud t &\leq \int_0^T \int_\Omega \left( \int_{\bm{Y}\times\bm{Z}} \left\lvert z_i \beta'(y) \right\rvert \ud \mutx(y,z) \right)^p \ud x \ud t\\
&\leq \left\lVert \beta' \right\rVert_{L^\infty(\bm{Y})}^p \int_0^T \int_\Omega \left( \int_{\bm{Y}\times\bm{Z}} \left\lvert z_i \right\rvert \ud \mutx(y,z) \right)^p \ud x \ud t.
\end{align*}
We use Jensen's inequality to obtain
\begin{equation*}
\int_0^T \int_\Omega \left\lvert \inner{\mutx}{z_i\beta'(y)} \right\rvert^p \ud x \ud t \leq \left\lVert \beta' \right\rVert_{L^\infty(\bm{Y})}^p \int_0^T \int_\Omega \left( \int_{\bm{Y}\times\bm{Z}} \left\lvert z_i \right\rvert^p \ud \mutx(y,z) \right) \ud x \ud t.
\end{equation*}
Since $\mu$ has finite $p$-moments, one has
\begin{equation*}
\int_0^T \int_\Omega \left( \int_{\bm{Y}\times\bm{Z}} \left\lvert z_i \right\rvert^p \ud \mutx(y,z) \right) \ud x \ud t < \infty,
\end{equation*}
and thus
\begin{equation*}
\int_0^T \int_\Omega \left\lvert \inner{\mutx}{z_i\beta'(y)} \right\rvert^p \ud x \ud t < \infty.
\end{equation*}
Therefore, $\inner{\mutx}{z_i\beta'(y)} \in L^p((0,T)\times\Omega)$ for all $i \in \{0,1,\ldots,n\}$ . Since $\mu$ satisfies the integration-by-parts formula, it follows that $\inner{\mutx}{\beta(y)} \in W^{1,p}((0,T)\times\Omega)$ with $\partial_t \inner{\mutx}{\beta(y)} = \inner{\mutx}{z_0\beta'(y)}$ and $\nabla_x \inner{\mutx}{\beta(y)} = \inner{\mutx}{\overline{z}\beta'(y)}$.

Finally, let us observe that
\begin{equation}
\label{eq:subset_W_1p}
    W^{1,p}((0,T)\times\Omega) \subset W^{1,p}(0,T;L^p(\Omega)) \cap L^p(0,T;W^{1,p}(\Omega)).
\end{equation}
Indeed, if $u \in W^{1,p}((0,T)\times\Omega)$, then by definition $u, \partial_t u, \nabla_x u \in L^p((0,T)\times\Omega)$. Then, by Fubini's theorem, this means that $u, \partial_t u \in L^p(0,T;L^p(\Omega))$ and $u, \nabla_x u \in L^p(0,T;L^p(\Omega))$. Therefore, $u \in W^{1,p}(0,T;L^p(\Omega))$ and $u \in L^p(0,T;W^{1,p}(\Omega))$, which proves \eqref{eq:subset_W_1p}.\\
\noindent Assume that $p\geq r$. Therefore $p\geq 2$ since $r$ satisfies \eqref{eq:definition_r}, and it follows directly from \eqref{eq:subset_W_1p} that
\begin{equation*}
    W^{1,p}((0,T)\times\Omega) \subset H^1(0,T;L^r(\Omega)) \cap L^2(0,T;W^{1,r}(\Omega)).
\end{equation*}
Using the inclusion $H^1(0,T;L^r(\Omega)) \subset C([0,T];L^r(\Omega))$ (see \cite[Theorem 2, 5.9.2]{evans_pde}) and the continuous embedding \eqref{eq:continuous_embedding_W_1r}, one eventually proves that
\begin{equation*}
W^{1,p}((0,T)\times\Omega) \subset C([0,T];L^r(\Omega)) \cap L^2(0,T;C(\overline{\Omega})).
\end{equation*}
\end{proof}
The next definition introduces four moment quantities that will play a central role in the measure-valued formulation of the PDE. These quantities are motivated by the weak formulation \eqref{eq:weak_solution} and by the energy identity \eqref{eq:weak_formulation_energy}.
\begin{definition}[First-order moment, second-order moment and integral source terms]
Let $\mu \in \mathcal{Y}^r([0,T]\times\Omega;\bm{Y}\times\bm{Z})$, where $r$ is the exponent introduced in \eqref{eq:definition_r}. The first- and second-order moments of $\mutx$ with respect to the variable $y$, respectively denoted $m_1$ and $m_2$, are defined for almost all $(t,x)\in[0,T]\times\Omega$ by
\begin{align*}
    m_1(t,x) &\coloneq \inner{\mutx}{y},\\
    m_2(t,x) &\coloneq \inner{\mutx}{y^2}.
\end{align*}
Two integral source terms of interest $m_f$ and $\hat{m}_f$ are also defined for almost all $(t,x)\in[0,T]\times\Omega$ by
\begin{gather*}
    m_f(t,x) \coloneq \inner{\mutx}{f(t,x,y)},\\
    \hat{m}_f(t,x) \coloneq 2\inner{\mutx}{yf(t,x,y)} -2\inner{\mutx}{\left\lVert \overline{z} \right\rVert^2}.
\end{gather*}
\end{definition}
The following proposition specifies the function spaces to which the moment quantities introduced above belong.
\begin{proposition}
\label{prop:integrability_moments_of_interest}
Let $\mu \in \mathcal{Y}^r([0,T]\times\Omega;\bm{Y}\times\bm{Z})$. Suppose that $\mu$ satisfies the integration-by-parts formula. Then, one has
\begin{equation}
\label{eq:bounded_m1_and_m2}
    m_1,m_2 \in W^{1,r}((0,T)\times\Omega) \cap L^\infty((0,T)\times\Omega).
\end{equation}
In particular, $m_1$ and $m_2$ admit two continuous representatives (still denoted $m_1$ and $m_2$) in the following sense:
\begin{equation}
\label{eq:continuous_m1_and_m2}
    m_1,m_2 \in C([0,T];L^r(\Omega)) \cap L^2(0,T;C(\overline{\Omega})).
\end{equation}
Furthermore, the integral source terms satisfy
\begin{equation}
    m_f\in L^\infty((0,T)\times\Omega)
\end{equation}
and
\begin{equation}
\hat{m}_f\in L^{r/2}((0,T)\times\Omega).
\end{equation}
\end{proposition}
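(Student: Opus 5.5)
The claims about $m_1$ and $m_2$ follow directly from Proposition~\ref{prop:existence_of_weak_derivative_for_Young_measures}, applied with $p=r$ and the two choices $\beta(y)=y$ and $\beta(y)=y^2$. Both belong to $C^1(\bm{Y})$, and $\mu$ satisfies the integration-by-parts formula by hypothesis. That proposition then gives \eqref{eq:bounded_m1_and_m2}, with the explicit derivatives
\[
\partial_t m_1=\inner{\mutx}{z_0},\qquad \nabla_x m_2 = 2\inner{\mutx}{y\overline{z}},
\]
and similarly for the others. Since $p=r\geq r$, its second part yields the continuous representatives in \eqref{eq:continuous_m1_and_m2}. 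Nothing more is needed here beyond checking that the measurability of the pairings follows from the finite $r$-moments together with polynomial growth of the integrands.

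For $m_f$, I use the fact that $\mutx$ is supported in $\bm{Y}\times\bm{Z}$ with $\bm{Y}$ compact. Assumption~\ref{assump:locally_bounded_f} with $K=\bm{Y}$ gives $|f(t,x,y)|\le M_{\bm{Y}}$ for almost all $(t,x)$ and all $y\in\bm{Y}$. Because $\mutx$ is a probability measure, this gives $|m_f(t,x)|\le M_{\bm{Y}}$ almost everywhere. The step needing care is the measurability of $(t,x)\mapsto \inner{\mutx}{f(t,x,y)}$, because the integrand now depends on the parameter $(t,x)$. Since $f$ is Carathéodory (measurable in $(t,x)$ and continuous in $y$) and bounded on $\bm{Y}$, the standard measurability result for Young measures integrated against Carathéodory integrands applies. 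Concretely, I would approximate $f$ by finite sums $\sum_k a_k(t,x)g_k(y)$, with $a_k$ measurable and $g_k\in C(\bm{Y})$, uniformly in $y$ for almost every $(t,x)$; each such sum has a measurable pairing, and the limit is then measurable. I expect this measurability point to be the only real technical obstacle.

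For $\hat m_f$, the same bound gives $|\inner{\mutx}{yf(t,x,y)}|\le \max_{\bm{Y}}|y|\, M_{\bm{Y}}$, so the first term lies in $L^\infty\subset L^{r/2}$ because $(0,T)\times\Omega$ is bounded. For the second term, note that $\|\overline{z}\|^2\le\|(y,z)\|^2$. Proposition~\ref{prop:integrability_Young_measures} with $p=r$ and $k=2$ (allowed since $r\ge2$) gives $\inner{\mutx}{\|(y,z)\|^2}\in L^{r/2}$. Hence $\inner{\mutx}{\|\overline z\|^2}\in L^{r/2}$ by domination, and summing the two terms gives $\hat m_f\in L^{r/2}((0,T)\times\Omega)$.
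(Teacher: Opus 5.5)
Your proposal is correct and follows the paper's proof: you apply Proposition~\ref{prop:existence_of_weak_derivative_for_Young_measures} with $\beta(y)=y$ and $\beta(y)=y^2$, you bound $m_f$ and $\inner{\mutx}{yf(t,x,y)}$ through Assumption~\ref{assump:locally_bounded_f} on the compact set $\bm{Y}$, and you use Proposition~\ref{prop:integrability_Young_measures} for $\inner{\mutx}{\lVert\overline{z}\rVert^2}$. Your extra remarks on the measurability of the Carath\'eodory pairing and on the domination $\lVert\overline{z}\rVert^2\le\lVert(y,z)\rVert^2$ make explicit two points the paper leaves implicit.
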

\begin{proof}
The fact that $m_1$ and $m_2$ satisfy \eqref{eq:bounded_m1_and_m2} and \eqref{eq:continuous_m1_and_m2} comes from Proposition \ref{prop:existence_of_weak_derivative_for_Young_measures} applied to $\beta(y)=y$ and $\beta(y)=y^2$, respectively.

Now, let us observe that, since $\bm{Y}$ is compact and since $f$ satisfies \eqref{eq:boundedness_f}, one has for almost all $(t,x)\in [0,T]\times\Omega$,
\begin{equation*}
    \left\lvert m_f(t,x) \right\rvert = \left\lvert \inner{\mutx}{f(t,x,y)} \right\rvert \leq \inner{\mutx}{\left\lvert f(t,x,y)\right\rvert} \leq M_{\bm{Y}} \inner{\mutx}{1},
\end{equation*}
for some constant $M_{\bm{Y}} \geq 0$. Since $\mutx$ is a probability measure, it follows that $\inner{\mutx}{1} = 1$. Therefore, $\left\lvert m_f(t,x) \right\rvert \leq M_{\bm{Y}}$ for almost all $(t,x)\in[0,T]\times\Omega$. Thus, $m_f \in L^\infty((0,T)\times\Omega)$.

Finally, in virtue of Proposition \ref{prop:integrability_Young_measures}, we have $2\inner{\mutx}{\left\lVert \overline{z} \right\rVert^2} \in L^{r/2}((0,T)\times\Omega)$. Because $2\inner{\mutx}{yf(t,x,y)} \in L^\infty((0,T)\times\Omega)$ for the same reason as $m_f$, it follows that $\hat{m}_f\in L^{r/2}((0,T)\times\Omega)$.
\end{proof}
We now introduce the notion of energy measure-valued solutions to the PDE \eqref{eq:semilinear_PDE}. Such solutions are Young measures satisfying a variational identity analogous to the weak formulation satisfied by $y^*$ in \eqref{eq:weak_solution}, together with energy identities corresponding to \eqref{eq:weak_formulation_energy} and \eqref{eq:weak_formulation_dissipation}.
\begin{definition}[Energy measure-valued solutions]
\label{def:emv_solutions} 
Let $\mu \in \mathcal{Y}^r([0,T]\times\Omega;\bm{Y}\times\bm{Z})$. We say that $\mu$ is an \emph{energy measure-valued (emv) solution} to the PDE \eqref{eq:semilinear_PDE} if it satisfies the integration-by-parts formula and the following moment identities: for every $v\in H_0^1(\Omega)$ and every $\varphi\in C_c^\infty((0,T))$,
\begin{subequations}
\label{eq:mv_solutions_constraints}
\begin{gather}
\int_0^T \int_\Omega \varphi(t) \Bigl[v(x) \partial_tm_1(t,x)+\nabla_xv(x)\cdot\nabla_xm_1(t,x) -v(x) m_f(t,x)\Bigr] \ud x \ud t = 0,\label{eq:weak_formulation_m1}\\
m_1(t,\cdot) = 0, \text{ on $\partial\Omega$ for a.e. } t\in[0,T], \label{eq:boundary_condition_m1}\\
m_1(0,\cdot)= y_0, \text{ in $L^r(\Omega)$}, \label{eq:initial_condition_m1}
\end{gather}
\end{subequations}
and
\begin{subequations}
\label{eq:emv_solutions_constraints}
\begin{gather}
\int_0^T \int_\Omega \varphi(t)\Bigl[ v(x) \partial_tm_2(t,x)+\nabla_xv(x)\cdot\nabla_xm_2(t,x) -v(x) \hat{m}_f(t,x)\Bigr] \ud x \ud t = 0,\label{eq:weak_formulation_m2}\\
m_2(t,\cdot) = 0, \text{ on $\partial\Omega$ for a.e. } t\in[0,T], \label{eq:boundary_condition_m2}\\
m_2(0,\cdot)= y_0^2, \text{ in $L^r(\Omega)$}, \label{eq:initial_condition_m2}
\end{gather}
\end{subequations}
and
\begin{equation}
\label{eq:moment_dissipation_z0}
\int_0^T\int_\Omega \varphi(t)\inner{\mutx}{z_0^2} - \varphi'(t)\inner{\mutx}{\frac{1}{2}\left\lVert \overline{z} \right\rVert^2}- \varphi(t)\inner{\mutx}{z_0f(t,x,y)} \ud x \ud t = 0.
\end{equation}
\end{definition}
\begin{remark}
Notice that \eqref{eq:weak_formulation_m1} is equivalent to requiring that for any $v\in H^1_0(\Omega)$ and for almost all $t\in[0,T]$,
\begin{equation}
\label{eq:weak_formulation_m1_ae_t}
\int_\Omega v(x) \partial_tm_1(t,x)+\nabla_xv(x)\cdot\nabla_xm_1(t,x) -v(x) m_f(t,x)\ud x=0.
\end{equation}
Furthermore, since $\mu$ has finite $r$-moments in Definition \ref{def:emv_solutions}, we know from Proposition \ref{prop:integrability_moments_of_interest} that $m_1\in W^{1,r}((0,T)\times\Omega)\cap L^\infty((0,T)\times\Omega)$ and $m_f\in L^\infty((0,T)\times\Omega)$. Thus the space integral in \eqref{eq:weak_formulation_m1} and \eqref{eq:weak_formulation_m1_ae_t} is well defined for all $v\in H^1_0(\Omega)$. Finally, the prescriptions of the boundary condition \eqref{eq:boundary_condition_m1} and the initial condition \eqref{eq:initial_condition_m1} make sense in virtue of the continuity result \eqref{eq:continuous_m1_and_m2} satisfied by $m_1$.

A Young measure satisfying only the constraints
\eqref{eq:mv_solutions_constraints} involving $m_1$ is usually referred to as a \emph{measure-valued (mv) solution} to the original PDE. However, as discussed below in Remark \ref{remark:mv_solutions_are_not_enough}, this notion is not
restrictive enough to imply concentration.
\end{remark}
\begin{remark}
Notice that \eqref{eq:weak_formulation_m2} is equivalent to requiring that for any $v\in H^1_0(\Omega)$ and for almost all $t\in[0,T]$,
\begin{equation}
\label{eq:weak_formulation_m2_ae_t}
\int_\Omega  v(x) \partial_tm_2(t,x)+\nabla_xv(x)\cdot\nabla_xm_2(t,x) -v(x) \hat{m}_f(t,x) \ud x=0.
\end{equation}
Furthermore, since $\mu$ has finite $r$-moments in Definition \ref{def:emv_solutions}, we know from Proposition \ref{prop:integrability_moments_of_interest} that $m_2\in W^{1,r}((0,T)\times\Omega) \subset H^1((0,T)\times\Omega)$. Therefore, one has $v\partial_tm_2(t,\cdot)\in L^1(\Omega)$ and $ \nabla_x v \cdot \nabla_x m_2(t,\cdot) \in L^1(\Omega)$ for any $v \in H^1_0(\Omega)$ and for almost all $t\in[0,T]$. By Proposition \ref{prop:integrability_moments_of_interest} again, one has $\hat{m}_f\in L^{r/2}((0,T)\times\Omega)$. Therefore, $\hat{m}_f(t,\cdot) \in L^{r/2}(\Omega)$ for almost all $t\in[0,T]$, and it follows from Proposition \ref{prop:properties_sobolev_Lr2} applied to $\alpha = 1/2$ that $\hat{m}_f(t,\cdot) v \in L^1(\Omega)$ for any $v \in H^1_0(\Omega)$. Thus, the space integral in \eqref{eq:weak_formulation_m2} and \eqref{eq:weak_formulation_m2_ae_t} is well defined for any $v\in H^1_0(\Omega)$ since all the terms are integrable.

The prescriptions of the boundary condition \eqref{eq:boundary_condition_m2} and the initial condition \eqref{eq:initial_condition_m2} make sense in virtue of the continuity result \eqref{eq:continuous_m1_and_m2} satisfied by $m_2$.

Now, we proceed to justify that \eqref{eq:moment_dissipation_z0} is well defined. First, let us observe that since $\bm{Y}$ is compact and since $f$ satisfies \eqref{eq:boundedness_f}, one has for almost all $(t,x)\in [0,T]\times\Omega$,
\begin{equation*}
    \left\lvert \inner{\mutx}{z_0f(t,x,y)} \right\rvert \leq \inner{\mutx}{\left\lvert z_0 f(t,x,y)\right\rvert} \leq M_{\bm{Y}} \inner{\mutx}{\left\lvert z_0 \right\rvert},
\end{equation*}
for some constant $M_{\bm{Y}} \geq 0$. Using the fact that $\mu$ has finite $r$-moments, one immediately obtains that $\inner{\mutx}{\left\lvert z_0 \right\rvert} \in L^r((0,T)\times\Omega)$. It follows from the estimate above that $\inner{\mutx}{ z_0 f(t,x,y) } \in L^r((0,T)\times\Omega) \subset L^1((0,T)\times \Omega)$ is integrable. Finally, one can use Proposition \ref{prop:integrability_Young_measures} to obtain that $\inner{\mutx}{z_0^2}, \inner{\mutx}{\left\lVert\overline{z}\right\rVert^2}\in L^{r/2}((0,T)\times\Omega)$. Since $r\geq2$, it follows that $\inner{\mutx}{z_0^2}, \inner{\mutx}{\left\lVert\overline{z}\right\rVert^2}\in L^{1}((0,T)\times\Omega)$. Therefore, \eqref{eq:moment_dissipation_z0} is well defined since all the terms are integrable.
\end{remark}
\begin{remark}
\label{remark:mv_solutions_are_not_enough}
The constraints \eqref{eq:mv_solutions_constraints} appearing in Definition~\ref{def:emv_solutions} are not sufficient, in general, to imply concentration. To illustrate this point, consider the particular case where
\begin{equation*}
f(t,x,y)=y^3 \qquad\text{and}\qquad u_0=0
\end{equation*}
in the PDE \eqref{eq:semilinear_PDE}. In this case, the corresponding unique solution is $y^*=0$.

Let $g\in C_c^\infty((0,T)\times\Omega)$ be nonidentically zero, and define $\mu$ as the parametrized family of probability measures
\begin{equation*}
    \mutx \coloneq \frac{1}{2} \left( \delta_{(g(t,x),\partial_t g(t,x),\nabla_x g(t,x))} + \delta_{(-g(t,x),-\partial_t g(t,x),-\nabla_x g(t,x))} \right).
\end{equation*}
Since $g$ is smooth and compactly supported, the measure $\mu$ has finite $r$-moments and satisfies the null boundary condition and initial condition \eqref{eq:boundary_condition_m1} and \eqref{eq:initial_condition_m1}. Moreover, since $g$ satisfies the integration-by-parts formula for functions, it follows that $\mu$ satisfies the integration-by-parts formula for Young measures. Finally, the weak formulation on Young measures \eqref{eq:weak_formulation_m1} is also satisfied. Indeed, by symmetry, one has
\begin{equation*}
    m_1(t,x) = \frac{1}{2}g(t,x) +\frac{1}{2}\left(-g(t,x) \right) = 0,
\end{equation*}
and since $f(y)=y^3$ is odd, one has
\begin{equation*}
    \inner{\mutx}{f(t,x,y)} = \frac{1}{2} g(t,x)^3 + \frac{1}{2} \left(-g(t,x)\right)^3 = 0.
\end{equation*}
Thus all the constraints \eqref{eq:mv_solutions_constraints} in Definition~\ref{def:emv_solutions} are fulfilled.

However, $\mutx$ is not concentrated on the solution $y^*=0$ on the set where $g(t,x)\neq 0$, which has positive measure for a suitable choice of $g$. This motivates the introduction of the second-order moment conditions \eqref{eq:emv_solutions_constraints} and \eqref{eq:moment_dissipation_z0}, which are designed to rule out such non-concentrated symmetric configurations.
\end{remark}

We are now ready to state the main result of this work.
\begin{theorem}[Concentration of emv solutions]
\label{theorem:concentration_emv_solutions}
Suppose that Assumption~\ref{assumption:existence_weak_solution} holds, and let $y^*$ be the corresponding weak solution of the PDE. Then every emv solution $\mu$ to the PDE \eqref{eq:semilinear_PDE} is concentrated on $y^*$ and its first-order derivatives, namely
\begin{equation*}
    \mutx = \delta_{\left(y^*(t,x),\partial_t y^*(t,x),\nabla_x y^*(t,x)\right)}  \quad \text{for a.e. } (t,x)\in [0,T]\times\Omega .
\end{equation*}
\end{theorem}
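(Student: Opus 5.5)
We use a squared-error density argument. Define the defect
\[
E(t,x) \coloneq \inner{\mutx}{(y-y^*(t,x))^2} = m_2(t,x) - 2y^*(t,x)m_1(t,x) + y^*(t,x)^2 ,
\]
which is nonnegative. The plan is to show $E\equiv 0$ by a Gronwall argument on $\int_\Omega E(t,x)\ud x$. Once $E\equiv 0$, the $y$-marginal of $\mutx$ is $\delta_{y^*(t,x)}$. We then upgrade this to concentration in $z$ using a second defect, built from the dissipation identity \eqref{eq:moment_dissipation_z0}.

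\textbf{Step 1: an evolution equation for $E$.} By Proposition~\ref{prop:integrability_moments_of_interest} and Proposition~\ref{prop:product_in_Xr}, $E\in C([0,T];L^r(\Omega))$, $E$ vanishes on $\partial\Omega$, and $E(0,\cdot)=y_0^2-2y_0^2+y_0^2=0$. We test three identities with $v\in H^1_0(\Omega)$: the $m_2$-identity \eqref{eq:weak_formulation_m2_ae_t}; the $m_1$-identity \eqref{eq:weak_formulation_m1_ae_t}, tested with $-2vy^*$ (admissible by Proposition~\ref{prop:properties_sobolev_W1r}); and the weak formulation for $y^*$ \eqref{eq:weak_solution_ae_t}, tested with $-2vm_1$, which lies in $H^1_0$ for a.e. $t$. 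We add these to the energy identity \eqref{eq:weak_formulation_energy_ae_t}.

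Collecting terms with the product rules, the gradient terms should recombine to give, for a.e. $t$,
\[
\int_\Omega v\,\partial_t E+\nabla v\cdot\nabla E \ud x
=\int_\Omega v\Big(2\inner{\mutx}{(y-y^*)(f(t,x,y)-f(t,x,y^*))}-2\inner{\mutx}{\|\overline z-\nabla y^*\|^2}\Big)\ud x .
\]
The cross terms $\nabla y^*\cdot\nabla m_1=\nabla y^*\cdot\inner{\mutx}{\overline z}$ come from Proposition~\ref{prop:existence_of_weak_derivative_for_Young_measures}. Verifying this algebra, and integrability of every term, is routine but tedious.

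\textbf{Step 2: Gronwall.} We take $v\to 1$, approximating it by cutoffs $v_k\in H^1_0$ with $0\le v_k\le1$; this is the delicate point. The term $\int\nabla v_k\cdot\nabla E$ must be controlled. Alternatively, we can choose $v$ to be the solution of $-\Delta v=1$, or use that $E\ge0$ with $E=0$ on $\partial\Omega$, so that the boundary flux $-\partial_\nu E\ge0$ has a favourable sign: $\int_\Omega\Delta E\le 0$.

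With either choice, the one-sided Lipschitz Assumption~\ref{assump:OSL_f} on the compact $\bm Y$ bounds the reaction term by $2L_{\bm Y}\inner{\mutx}{(y-y^*)^2}=2L_{\bm Y}E$. The gradient defect term is nonpositive. This gives $\frac{d}{dt}\int_\Omega E\le 2L_{\bm Y}\int_\Omega E$ with zero initial value, hence $E=0$ a.e. As a byproduct, the dissipation term also integrates to zero, so $\overline z=\nabla y^*$ $\mu$-a.e. as well. \emph{This sign and justification step for the boundary and cutoff is the main obstacle.} I would first try the cutoff sequence, combined with the fact that $E$ is a nonnegative $W^{1,r}$ function with zero trace.

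\textbf{Step 3: the time derivative.} It remains to show $z_0=\partial_t y^*$ $\mu$-a.e. The idea is to consider $F=\int_0^T\varphi\int_\Omega\inner{\mutx}{(z_0-\partial_ty^*)^2}$ for $\varphi\ge0$ and expand the square.

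Knowing $\mutx=\delta_{y^*}\otimes\nu_{(t,x)}$ with $\overline z=\nabla y^*$, we have the following:
\begin{itemize}
\item $\inner{\mutx}{z_0}=\partial_t m_1=\partial_t y^*$, by \eqref{eq:IBP_in_time};
\item $\inner{\mutx}{z_0 f}=f(y^*)\partial_t y^*$;
\item $\inner{\mutx}{\|\overline z\|^2}=\|\nabla y^*\|^2$.
\end{itemize}
Substituting into \eqref{eq:moment_dissipation_z0} and subtracting \eqref{eq:weak_formulation_dissipation} yields
\[
\int\varphi\big(\inner{\mutx}{z_0^2}-(\partial_t y^*)^2\big)=0 .
\]
Therefore $F=0$ for every $\varphi\ge0$. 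Hence $\mutx$ is the stated Dirac mass a.e.
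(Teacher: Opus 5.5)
Your Steps 1 and 3 follow the paper. Step 1 builds the same squared-error density (called $w$ there) by testing the $m_1$-identity with $vy^*$ and the weak formulation with $vm_1$, then combining with the $m_2$- and energy identities. Step 3 uses \eqref{eq:moment_dissipation_z0}, \eqref{eq:weak_formulation_dissipation} and $\inner{\mutx}{z_0}=\partial_t y^*$ to kill the variance in $z_0$, as the paper does.

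The divergence is Step~2, and as written it is a genuine gap: the inequality $\frac{\ud}{\ud t}\int_\Omega E\le 2L_{\bm Y}\int_\Omega E$ is asserted but not proved. The paper never lets $v\to1$. It sets $\hat w=e^{-2L_{\bm Y}t}w$ and tests the variational inequality with $\varphi_g(T-t,\cdot)$, where $\varphi_g$ solves a heat equation with an arbitrary nonnegative source $g$. Using the weak maximum principle and the integration-by-parts formula in $H^1(0,T;H^{-1})\cap L^2(0,T;H^1_0)$, it obtains $\int_0^T\!\int_\Omega \hat w g\le0$, hence $w\le0$.

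Your three options do not close this as stated:
\begin{itemize}
\item With fixed cutoffs $v_k\to1$, the term $\int_\Omega\nabla v_k\cdot\nabla E$ has no sign in general. Its formal limit $-\int_{\partial\Omega}\partial_\nu E$ is not defined for $E(t,\cdot)\in W^{1,r}_0(\Omega)$ on a Lipschitz domain.
\item The flux-sign remark is therefore only heuristic.
\item The torsion choice does work, but it does not give the inequality you wrote. With $v_0\in H^1_0(\Omega)$, $-\Delta v_0=1$, $v_0\ge0$, one has $\int_\Omega\nabla v_0\cdot\nabla E=\int_\Omega E$, hence
\[
\frac{\ud}{\ud t}\int_\Omega v_0E+\int_\Omega E\le 2L_{\bm Y}\int_\Omega v_0E .
\]
Gronwall then gives $\int_\Omega v_0E\equiv0$, and afterwards $\int_0^t\!\int_\Omega E\le0$.
\end{itemize}

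The simplest repair is to test the Step~1 identity with $v=E(t,\cdot)$ itself. This is admissible: $E(t,\cdot)$ is nonnegative and lies in $W^{1,r}_0(\Omega)\subset H^1_0(\Omega)$ for a.e.\ $t$. Time-dependent test functions are allowed because the identity holds for a.e.\ $t$ simultaneously for all $v$, by separability of $H^1_0$; the paper relies on the same device. Since $E\in H^1(0,T;L^2(\Omega))$, this gives
\[
\frac{\ud}{\ud t}\lVert E(t,\cdot)\rVert_{L^2}^2+2\lVert\nabla_x E(t,\cdot)\rVert_{L^2}^2\le 4L_{\bm Y}\lVert E(t,\cdot)\rVert_{L^2}^2 .
\]
The diffusion term now has a trivial sign and there is no boundary issue, so $E(0,\cdot)=0$ yields $E\equiv0$.

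This energy estimate is shorter than the paper's duality argument. That argument needs the same regularity of $w$ (it also uses $\hat w(T-t,\cdot)$ as a test function), plus the auxiliary heat equation and the maximum principle.

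With this test function, concentration of $\overline z$ is no longer a by-product of the Gronwall step. Obtain it as the paper does: once $E\equiv0$, plug $E\equiv0$ back into the Step~1 identity with arbitrary $v\in C^\infty_c(\Omega)$. The reaction term vanishes because the $y$-marginal is then $\delta_{y^*(t,x)}$.
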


\section{Proof of Theorem \ref{theorem:concentration_emv_solutions}}
\label{sec:proof}

In order to prove Theorem \ref{theorem:concentration_emv_solutions}, we suppose that Assumption \ref{assumption:existence_weak_solution} holds. By Proposition~\ref{prop:uniqueness_weak_solution}, the corresponding weak solution $y^*$ is unique. Let $\mu \in \mathcal{Y}^r([0,T]\times\Omega;\bm{Y}\times\bm{Z})$ be an emv solution to the PDE \eqref{eq:semilinear_PDE}.

We begin by introducing a quantity that measures the deviation of the Young measure $\mu$ from the Dirac mass concentrated at $y^*(t,x)$. This squared-error density will be the main object in the concentration argument.

\begin{definition}[Squared-error density]
The \emph{squared-error density $w$} is defined for almost all $(t,x) \in [0,T]\times\Omega$ by
\begin{equation}
\label{eq:definition_w}
    w(t,x) \coloneq \inner{\mutx}{\left(y - y^*(t,x) \right)^2} = m_2(t,x) -2y^*(t,x)m_1(t,x) + (y^*(t,x))^2.
\end{equation} 
\end{definition}
The first step of the proof consists in proving the following lemma.
\begin{lemma}
The squared-error density $w$ satisfies
\begin{equation}
\label{eq:space_w}
    w \in H^1(0,T;L^r(\Omega)) \cap L^2(0,T;W_0^{1,r}(\Omega)).
\end{equation}
Moreover, for any $v\in H^1_0(\Omega)$ and for almost all $t\in[0,T]$, it holds that
\begin{multline}
\label{eq:PDE_w}
    \int_\Omega \Bigl[v(x) \partial_t w(t,x) + \nabla_x v(x) \cdot \nabla_x w(t,x) +2v(x)\inner{\mutx}{\left\lVert \overline{z}-\nabla_xy^*(t,x) \right\rVert^2}\\
    -2v(x) \inner{\mutx}{\left(y-y^*(t,x)\right)\left(f(t,x,y)-f(t,x,y^*(t,x))\right)} \Bigr]\ud x = 0.
\end{multline}
Finally, $w$ satisfies the null initial condition in $L^r(\Omega)$
\begin{align}
\label{eq:initial_condition_w}
    w(0,\cdot)=0.
\end{align}
\end{lemma}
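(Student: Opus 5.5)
The plan is to work directly with the decomposition $w = m_2 - 2y^*m_1 + (y^*)^2$ from \eqref{eq:definition_w} and to treat the three pieces separately, using the identities already available for each of them.

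\emph{Regularity \eqref{eq:space_w}.} By Proposition~\ref{prop:integrability_moments_of_interest} together with \eqref{eq:subset_W_1p}, both $m_1$ and $m_2$ lie in $H^1(0,T;L^r(\Omega))\cap L^2(0,T;W^{1,r}(\Omega))$. Proposition~\ref{prop:product_in_Xr} gives $(y^*)^2\in X_r$. For the cross term $y^*m_1$ we use three facts:
\begin{itemize}
\item $y^*\in C([0,T];W^{1,r}_0(\Omega))$ with $\partial_t y^*\in L^2(0,T;W^{1,r}_0(\Omega))\subset L^2(0,T;L^\infty(\Omega))$;
\item $m_1\in L^\infty$;
\item $W^{1,r}$ is a Banach algebra.
\end{itemize}
These show $y^*m_1\in L^2(0,T;W^{1,r}(\Omega))$ with $\nabla_x(y^*m_1)=m_1\nabla_x y^*+y^*\nabla_x m_1$. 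They also give $\partial_t(y^*m_1)=m_1\partial_t y^*+y^*\partial_t m_1\in L^2(0,T;L^r(\Omega))$. The product rule in time is justified by approximating $y^*$ by smooth-in-time functions, or via Bochner-space product rules, since one factor is $L^\infty$-valued.

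For the zero trace, we use \eqref{eq:boundary_condition_m1}, \eqref{eq:boundary_condition_m2} and $y^*=0$ on $\partial\Omega$: each summand of $w$ has vanishing trace, so $w(t,\cdot)\in W^{1,r}_0(\Omega)$ for a.e.\ $t$. The initial condition follows from \eqref{eq:initial_condition_m1}, \eqref{eq:initial_condition_m2} and \eqref{eq:initial_condition_weak_solution}:
\[
w(0)=y_0^2-2y_0^2+y_0^2=0 \quad\text{in } L^r(\Omega),
\]
using continuity in $C([0,T];L^r)$ of all three pieces.

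\emph{Identity \eqref{eq:PDE_w}.} Fix $v\in H^1_0(\Omega)$ and a.e.\ $t$, and combine three identities.
\begin{itemize}
\item Equation \eqref{eq:weak_formulation_m2_ae_t} handles $m_2$ with source $\hat m_f$.
\item Equation \eqref{eq:weak_formulation_energy_ae_t} handles $(y^*)^2$ with source $2y^*f(y^*)-2\|\nabla y^*\|^2$.
\item For $-2y^*m_1$, test \eqref{eq:weak_formulation_m1_ae_t} with $vy^*(t,\cdot)$ and \eqref{eq:weak_solution_ae_t} with $vm_1(t,\cdot)$. These test functions are admissible: $vy^*\in H^1_0$ by Proposition~\ref{prop:properties_sobolev_W1r}, and for $vm_1$ we use $m_1(t)\in W^{1,r}_0$ (zero trace) for a.e.\ $t$.
\end{itemize}
Adding, the gradient terms recombine as
\[
\nabla v\cdot\nabla(y^*m_1)=\nabla(vy^*)\cdot\nabla m_1+\nabla(vm_1)\cdot\nabla y^*-2v\,\nabla y^*\cdot\nabla m_1 .
\]
So the $-2y^*m_1$ piece contributes the source $-2y^*m_f-2m_1f(y^*)+4\nabla y^*\cdot\nabla m_1$. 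The time terms combine correctly by the product rule.

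Collecting sources, we must check that
\[
\hat m_f-2y^*m_f-2m_1f(y^*)+4\nabla y^*\cdot\nabla m_1+2y^*f(y^*)-2\|\nabla y^*\|^2
\]
equals $2\langle\mu,(y-y^*)(f(y)-f(y^*))\rangle-2\langle\mu,\|\bar z-\nabla y^*\|^2\rangle$. For this we use $\nabla_x m_1=\langle\mutx,\bar z\rangle$ from Proposition~\ref{prop:existence_of_weak_derivative_for_Young_measures} and $\langle\mutx,1\rangle=1$. Expanding both brackets and matching terms is a routine algebraic check.

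\emph{Main obstacle.} The delicate step is the rigorous justification of the product rules for $y^*m_1$, and the admissibility of $vm_1(t,\cdot)$ as a test function. The latter needs $m_1(t,\cdot)\in W^{1,r}_0(\Omega)$ for a.e.\ $t$. This follows from $m_1\in L^2(0,T;W^{1,r})$, the boundary condition, and the Banach-algebra and $H^1$-product property of Proposition~\ref{prop:properties_sobolev_W1r} giving $vm_1\in H^1_0$. For the time derivative, the first attempt would be to mollify $y^*$ in time within $H^1(0,T;W^{1,r}_0)$, apply the classical product rule, and pass to the limit in $L^1(0,T;L^r)$.
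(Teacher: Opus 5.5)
Your proposal is correct and follows the paper's proof essentially step for step. It uses the same splitting $w=m_2-2y^*m_1+(y^*)^2$ for the regularity and initial condition, the same test functions ($v\,y^*(t,\cdot)$ in the $m_1$ equation, $v\,m_1(t,\cdot)$ in the weak formulation for $y^*$), the same combination ($m_2$ identity minus twice the cross identity plus the energy identity for $(y^*)^2$), and the same use of $\nabla_x m_1=\inner{\mutx}{\overline{z}}$ to produce $\inner{\mutx}{\lVert \overline{z}-\nabla_x y^*\rVert^2}$; the only harmless difference is that you invoke the zero trace of $m_1$ to make $v\,m_1(t,\cdot)$ admissible, whereas the $W^{1,r}\times H^1$ product property already gives $v\,m_1(t,\cdot)\in H^1_0(\Omega)$ because $v$ itself lies in $H^1_0(\Omega)$.
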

\begin{proof}
Let us prove \eqref{eq:space_w} first. We know from Proposition \ref{prop:integrability_moments_of_interest} that
\begin{equation*}
m_1,m_2 \in W^{1,r}((0,T)\times\Omega) \subset H^1(0,T;L^r(\Omega))\cap L^2(0,T;W^{1,r}(\Omega)),
\end{equation*}
where the embedding of the two function spaces above was previously shown in the proof of Proposition \ref{prop:existence_of_weak_derivative_for_Young_measures}. Since $m_1$ and $m_2$ satisfy respectively the null boundary conditions \eqref{eq:boundary_condition_m1} and \eqref{eq:boundary_condition_m2}, it follows that
\begin{equation*}
m_1,m_2 \in H^1(0,T;L^r(\Omega))\cap L^2(0,T;W_0^{1,r}(\Omega)).
\end{equation*}

\noindent Next, using that $y^* \in X_r = H^1(0,T;W^{1,r}_0(\Omega))$, we obtain
\begin{equation*}
y^* m_1 \in H^1(0,T;L^r(\Omega))\cap L^2(0,T;W^{1,r}_0(\Omega)).
\end{equation*}
The proof is very similar to that of Proposition \ref{prop:product_in_Xr}; it relies on the fact that $W^{1,r}_0(\Omega)$ is a Banach algebra and that $$\partial_t(m_1 y^*) = m_1 \partial_t y^* + y^* \partial_t m_1 \in L^2(0,T;L^r(\Omega)),$$ since both $y^*$ and $m_1$ are in $L^\infty((0,T)\times\Omega)$.

\noindent Finally, we know from Proposition \ref{prop:product_in_Xr} that $(y^*)^2 \in X_r$. Since the embedding $$X_r = H^1(0,T;W^{1,r}_0(\Omega)) \subset H^1(0,T;L^r(\Omega))\cap L^2(0,T;W^{1,r}_0(\Omega))$$ holds, it follows that all the terms in \eqref{eq:definition_w} are in $H^1(0,T;L^r(\Omega))\cap L^2(0,T;W^{1,r}_0(\Omega))$, which proves \eqref{eq:space_w}.

Now, we proceed to prove \eqref{eq:PDE_w}. Let $v\in H^1_0(\Omega)$ and consider $\tilde{v}_t\coloneq v y^*(t,\cdot)$ and $\hat{v}_t\coloneq v m_1(t,\cdot)$ for almost all $t\in[0,T]$. Since $y^*\in X_r$ and $m_1 \in W^{1,r}((0,T)\times\Omega)$, we have $y^*(t,\cdot) \in W^{1,r}(\Omega)$ and $m_1(t,\cdot) \in W^{1,r}(\Omega)$ for almost all $t\in[0,T]$. By Proposition \ref{prop:properties_sobolev_W1r}, it follows that $\tilde{v}_t \in H^1_0(\Omega)$ and $\hat{v}_t \in H^1_0(\Omega)$ for almost all $t\in[0,T]$.

\noindent Using $\tilde{v}_t$ as a test function in equation \eqref{eq:weak_formulation_m1_ae_t} and the product rule in space yields for almost all $t\in[0,T]$,
\begin{multline*}
    \int_\Omega \Bigl[ v(x)y^*(t,x) \partial_tm_1(t,x)+\left(y^*(t,x)\nabla_xv(x) + v(x)\nabla_xy^*(t,x) \right) \cdot\nabla_xm_1(t,x)\\
    -v(x)y^*(t,x) m_f(t,x)\Bigr] \ud x = 0.
\end{multline*}
Doing the same with $\hat{v}_t$ in \eqref{eq:weak_solution_ae_t} gives for almost all $t\in[0,T]$,
\begin{multline*}
\int_\Omega \Bigl[ v(x)m_1(t,x) \partial_ty^*(t,x) + \left(m_1(t,x)\nabla_xv(x) + v(x)\nabla_xm_1(t,x) \right) \cdot \nabla_x y^*(t,x)\\
- v(x)m_1(t,x) f(t,x,y^*(t,x)) \Bigr] \ud x = 0.
\end{multline*}
Adding the previous two equations and applying the product rule in time and space to $y^*m_1$ yields for almost all $t\in[0,T]$,
\begin{multline*}
    \int_\Omega \Bigl[ v(x) \partial_t(y^*m_1)(t,x) + \nabla_x v(x) \cdot \nabla_x (y^*m_1)(t,x) + 2v(x) \nabla_xy^*(t,x) \cdot \nabla_x m_1(t,x)\\
    -v(x)y^*(t,x) m_f(t,x)- v(x)m_1(t,x) f(t,x,y^*(t,x)) \Bigr] \ud x = 0.
\end{multline*}
Then, we can use the expressions $m_1(t,x)=\inner{\mutx}{y}$ and $m_f(t,x)=\inner{\mutx}{f(t,x,y)}$ to obtain for almost all $t\in[0,T]$,
\begin{multline}
\label{eq:equation_y_and_m1}
    \int_\Omega \Bigl[ v(x) \partial_t(y^*m_1)(t,x) + \nabla_x v(x) \cdot \nabla_x (y^*m_1)(t,x) + 2v(x) \nabla_xy^*(t,x) \cdot \nabla_x m_1(t,x)\\
    -v(x)\inner{\mutx}{y^*(t,x)f(t,x,y) + yf(t,x,y^*(t,x))} \Bigr] \ud x = 0.
\end{multline}
Since $y^*\in X_r$ is a weak solution to the PDE \eqref{eq:semilinear_PDE}, it satisfies the energy identity \eqref{eq:weak_formulation_energy_ae_t}, which we recall here for clarity. For almost all $t\in[0,T]$, it holds that
\begin{multline}
\label{eq:weak_formulation_energy_again}
    \int_\Omega \Bigl[ v(x) \partial_t((y^*)^2)(t,x) +\nabla_xv\cdot\nabla_x((y^*)^2)(t,x) - 2v(x)\inner{\mutx}{y^*(t,x)f(t,x,y^*(t,x))}\\
    +2v(x) \left\lVert \nabla_x y^*(t,x) \right\rVert^2 \Bigr] \ud x = 0.
\end{multline}
Notice that because $\mutx$ is a probability measure, we could replace $y^*(t,x)f(t,x,y^*(t,x))$ by $\inner{\mutx}{y^*(t,x)f(t,x,y^*(t,x))}$ in \eqref{eq:weak_formulation_energy_ae_t}.

\noindent Next, since $\mu$ is an emv solution to the PDE, $m_2$ satisfies the energy identity \eqref{eq:weak_formulation_m2_ae_t}. It writes for almost all $t\in[0,T]$,
\begin{multline}
\label{eq:weak_formulation_m2_again}
\int_\Omega \Bigl[ v(x) \partial_tm_2(t,x)+\nabla_xv(x)\cdot\nabla_xm_2(t,x) -2v(x)\inner{\mutx}{yf(t,x,y)}\\
+2v(x)\inner{\mutx}{\left\lVert \overline{z} \right\rVert^2} \Bigr] \ud x = 0.
\end{multline}
Notice that we have replaced $\hat{m}_f(t,x)$ by its definition $2\inner{\mutx}{yf(t,x,y)} - 2\inner{\mutx}{\left\lVert \overline{z} \right\rVert^2}$ in \eqref{eq:weak_formulation_m2_ae_t}. Taking $\eqref{eq:weak_formulation_m2_again} -2\times\eqref{eq:equation_y_and_m1} + \eqref{eq:weak_formulation_energy_again}$ gives for almost all $t\in[0,T]$,
\begin{align*}
0 &= \int_\Omega \Bigl[
      v(x)\partial_t\left(m_2 - 2y^*m_1 + (y^*)^2\right)(t,x)
      + \nabla_x v(x) \cdot \nabla_x\left(m_2 - 2y^*m_1 + (y^*)^2\right)(t,x)\\
 &\quad- 2v(x)\inner{\mutx}{yf(t,x,y)-y^*(t,x)f(t,x,y)-yf(t,x,y^*(t,x))+y^*(t,x)f(t,x,y^*(t,x))}\\
&\quad+ 2v(x)\left(\inner{\mutx}{\left\lVert \overline{z} \right\rVert^2}
      - 2\nabla_x y^*(t,x) \cdot \nabla_x m_1(t,x) + \left\lVert \nabla_x y^*(t,x) \right\rVert^2 \right)
   \Bigr]\ud x.
\end{align*}
Using the definition of $w$ \eqref{eq:definition_w} in the equation above yields for almost all $t\in[0,T]$,
\begin{align}
\label{eq:intermediate_pde_w}
0 &= \int_\Omega \Bigl[
      v(x)\partial_tw(t,x)
      + \nabla_x v(x) \cdot \nabla_xw(t,x) \notag \\
&\quad- 2v(x)\inner{\mutx}{\left(y-y^*(t,x)\right)\left(f(t,x,y)-f(t,x,y^*(t,x))\right)} \notag\\
&\quad+ 2v(x)\left(\inner{\mutx}{\left\lVert \overline{z} \right\rVert^2}
      - 2\nabla_x y^*(t,x) \cdot \nabla_x m_1(t,x) + \left\lVert \nabla_x y^*(t,x) \right\rVert^2 \right)
   \Bigr]\ud x.
\end{align}
Now, let us observe that for almost all $(t,x) \in [0,T]\times\Omega$,
\begin{align}
\label{eq:inner_intermediate_squared_norm_identity}
    \inner{\mutx}{\left\lVert \overline{z}-\nabla_xy^*(t,x) \right\rVert^2} &= \inner{\mutx}{\left\lVert \overline{z} \right\rVert^2 -2 \nabla_xy^*(t,x) \cdot \overline{z} + \left\lVert \nabla_x y^*(t,x) \right\rVert^2} \notag \\
    &= \inner{\mutx}{\left\lVert \overline{z} \right\rVert^2} -2\nabla_xy^*(t,x) \cdot \inner{\mutx}{\overline{z}} + \left\lVert \nabla_x y^*(t,x) \right\rVert^2,
\end{align}
where we have used the fact that $\mutx$ is a probability measure. Since $\mu \in \mathcal{Y}^r([0,T]\times\Omega;\bm{Y}\times\bm{Z})$ satisfies the integration-by-parts formula, we know from Proposition \ref{prop:existence_of_weak_derivative_for_Young_measures} that $\nabla_xm_1$ exists in $L^r((0,T)\times\Omega)$ and is given by $\nabla_xm_1 = \inner{\mutx}{\overline{z}}$. Thus, \eqref{eq:inner_intermediate_squared_norm_identity} becomes for almost all $(t,x) \in [0,T]\times\Omega$
\begin{equation}
\label{eq:inner_final_squared_norm_identity}
\inner{\mutx}{\left\lVert \overline{z}-\nabla_xy^*(t,x) \right\rVert^2} = \inner{\mutx}{\left\lVert \overline{z} \right\rVert^2}- 2\nabla_x y^*(t,x) \cdot \nabla_x m_1(t,x) + \left\lVert \nabla_x y^*(t,x) \right\rVert^2.
\end{equation}
Replacing \eqref{eq:inner_final_squared_norm_identity} in \eqref{eq:intermediate_pde_w} thus gives for any $v \in H^1_0(\Omega)$ and for almost all $t \in [0,T]$,
\begin{multline*}
0 = \int_\Omega \Bigl[
      v(x)\partial_tw(t,x)
      + \nabla_x v(x) \cdot \nabla_xw(t,x) + 2v(x)\inner{\mutx}{\left\lVert \overline{z}-\nabla_xy^*(t,x) \right\rVert^2}\\
      - 2v(x)\inner{\mutx}{\left(y-y^*(t,x)\right)\left(f(t,x,y)-f(t,x,y^*(t,x))\right)}
   \Bigr]\ud x,
\end{multline*}
which is exactly \eqref{eq:PDE_w}.

\noindent Finally, the null initial condition \eqref{eq:initial_condition_w} follows directly from the definition of $w$ \eqref{eq:definition_w} and the fact that $y^*(0,\cdot) = m_1(0,\cdot) = y_0$ and $m_2(0,\cdot) = y_0^2$.
\end{proof}
From the formulation \eqref{eq:PDE_w}, we can deduce a variational inequality satisfied by $w$.

\noindent Indeed, it follows from \eqref{eq:compact_Y} that $y^*(t,x)\in\bm{Y}$ for all $(t,x) \in [0,T]\times\overline{\Omega}$. Since $\bm{Y}$ is compact, the locally one-sided Lipschitz assumption \eqref{eq:OSL_f} on $f$ implies that there exists $L_{\bm{Y}}\geq0$ such that for any $y \in \bm{Y}$ and almost all $(t,x) \in [0,T]\times\Omega$,
\begin{align*}
\left(y-y^*(t,x)\right)\left(f(t,x,y)-f(t,x,y^*(t,x))\right) \leq L_{\bm{Y}} \left(y-y^*(t,x)\right)^2.
\end{align*}
Thus, we have for almost all $(t,x) \in [0,T]\times\Omega$,
\begin{align*}
\inner{\mutx}{\left(y-y^*(t,x)\right)\left(f(t,x,y)-f(t,x,y^*(t,x))\right)} &\leq L_{\bm{Y}} \inner{\mutx}{\left(y-y^*(t,x)\right)^2}\\
&= L_{\bm{Y}}w(t,x).
\end{align*}
Let $v^+ \in H^1_0(\Omega;\R_+)$ be a nonnegative test function. Multiplying the equation above by $v^+$ and integrating in space gives for almost all $t\in[0,T]$,
\begin{equation}
\label{eq:proof_estimate_osl_intermediate}
    \int_\Omega v^+(x) \inner{\mutx}{\left(y-y^*(t,x)\right)\left(f(t,x,y)-f(t,x,y^*(t,x))\right)} \ud x \leq\!\! \int_\Omega \! L_{\bm{Y}} v^+(x)w(t,x) \ud x.
\end{equation}
Moreover, observe that for almost all $t\in[0,T]$,
\begin{equation}
\label{eq:proof_estimate_norms_intermediate}
    0 \leq \int_\Omega v^+(x) \inner{\mutx}{\left\lVert \overline{z} - \nabla_xy^*(t,x) \right\rVert^2} \ud x.
\end{equation}
Using the estimates \eqref{eq:proof_estimate_osl_intermediate} and \eqref{eq:proof_estimate_norms_intermediate} in \eqref{eq:PDE_w} finally yields the variational inequality satisfied by $w$. It writes for any $v^+ \in H^1_0(\Omega;\R_+)$ and for almost all $t \in [0,T]$ as follows:
\begin{equation}
\label{eq:variational_inequality_w}
\int_\Omega v^+(x) \partial_t w(t,x) + \nabla_x v^+(x) \cdot \nabla_x w(t,x) -2L_{\bm{Y}}v^+(x)w(t,x) \ud x \leq 0.
\end{equation}
We are now ready to prove the following lemma.
\begin{lemma} The squared-error density is null almost everywhere on $[0,T]\times\Omega$, i.e.,
\begin{equation}
\label{eq:w_equal_0}
    w(t,x)=0, \quad \text{a.e. } (t,x)\in[0,T]\times\Omega.
\end{equation}
\end{lemma}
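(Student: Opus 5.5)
We will test the variational inequality \eqref{eq:variational_inequality_w} against $v^+=w(t,\cdot)$ itself and close the argument with Gronwall. Since $w(t,x)=\inner{\mutx}{(y-y^*(t,x))^2}\ge 0$ almost everywhere, and since by \eqref{eq:space_w} we have $w(t,\cdot)\in W^{1,r}_0(\Omega)\subset H^1_0(\Omega)$ for almost all $t$ (recall $r\geq 2$ and $\Omega$ is bounded), this choice is admissible: $w(t,\cdot)\in H^1_0(\Omega;\R_+)$ for a.e. $t$. Inserting it into \eqref{eq:variational_inequality_w} gives, for a.e. $t\in[0,T]$,
\begin{equation*}
\int_\Omega w(t,x)\,\partial_t w(t,x)\ud x + \int_\Omega \left\lVert \nabla_x w(t,x)\right\rVert^2 \ud x - 2L_{\bm{Y}}\int_\Omega w(t,x)^2\ud x \leq 0 .
\end{equation*}
We then drop the nonnegative gradient term.

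The second step is to identify the first term with $\frac12\frac{\mathrm d}{\mathrm dt}\left\lVert w(t,\cdot)\right\rVert_{L^2(\Omega)}^2$. Because $w\in H^1(0,T;L^r(\Omega))\subset H^1(0,T;L^2(\Omega))$, the function $E(t)\coloneq\left\lVert w(t,\cdot)\right\rVert_{L^2(\Omega)}^2$ is absolutely continuous on $[0,T]$, with $E'(t)=2\int_\Omega w\,\partial_t w\ud x$ for a.e. $t$. This is the standard chain rule for $H^1(0,T;H)$ with $H$ a Hilbert space, see \cite[5.9.2]{evans_pde}. The resulting differential inequality is
\begin{equation*}
E'(t)\leq 4L_{\bm{Y}}E(t) \quad \text{for a.e. } t\in[0,T].
\end{equation*}
Together with $E(0)=0$, which follows from \eqref{eq:initial_condition_w} and the continuity $w\in C([0,T];L^r(\Omega))\subset C([0,T];L^2(\Omega))$, Gronwall's lemma applied to the absolutely continuous function $e^{-4L_{\bm{Y}}t}E(t)$ gives $E\equiv 0$ on $[0,T]$. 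Hence $w(t,\cdot)=0$ in $L^2(\Omega)$ for every $t$, and by Fubini $w=0$ almost everywhere on $[0,T]\times\Omega$, which is \eqref{eq:w_equal_0}.

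The main obstacle is the justification of the test function. We must make sure that $w(t,\cdot)$ is a legitimate element of $H^1_0(\Omega;\R_+)$ for a.e. $t$, and that inequality \eqref{eq:variational_inequality_w}, which holds "for almost all $t$" for each fixed $v^+$, also holds with the time-dependent choice $v^+=w(t,\cdot)$. The exceptional null set depends on $v^+$, so this is not automatic. To handle it, we will return to the pointwise-in-$t$ derivation: \eqref{eq:PDE_w} was obtained from identities valid for a.e. $t$ in which all terms are $L^1(\Omega)$ functions paired with $v$. Integrating \eqref{eq:PDE_w} in time against $\varphi(t)v(x)$ and using density of finite sums $\sum_k\varphi_k(t)v_k(x)$ in $L^2(0,T;H^1_0(\Omega))$ will allow us to test directly with $w\,\mathbf 1_{[0,s]}(t)$. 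Alternatively, we can argue that $H^1_0(\Omega)$ is separable, so a countable dense family gives a common null set, and then pass to the limit using the integrability of each term. In either case we need $\partial_t w\in L^2(0,T;L^r)$, $\nabla_x w\in L^2(0,T;L^r)$ and $w\in L^\infty$, so that every product is integrable. The remaining terms, namely the dissipation term involving $\inner{\mutx}{\lVert\overline z-\nabla_x y^*\rVert^2}\in L^{r/2}$ paired with $w\in L^\infty$ and the one-sided Lipschitz bound, keep their signs after integration. The integrated inequality
\begin{equation*}
\tfrac12 E(s)\le 2L_{\bm{Y}}\int_0^s E(t)\ud t
\end{equation*}
then follows, and Gronwall concludes.
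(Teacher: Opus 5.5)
Your proposal is correct, but it takes a different route from the paper.

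\textbf{The paper's route.} The paper never tests with $w$ itself. It rescales to $\hat w=e^{-2tL_{\bm{Y}}}w$. For each nonnegative $g\in C^\infty_c((0,T)\times\Omega)$ it introduces the solution $\varphi_g$ of an auxiliary heat equation with source $g(T-t,\cdot)$ and zero initial data, and uses the weak maximum principle to get $\varphi_g\ge 0$. It then tests the variational inequality with $\varphi_g(T-t,\cdot)$. After an integration by parts in $H^1(0,T;H^{-1}(\Omega))\cap L^2(0,T;H^1_0(\Omega))$, it obtains $\int_0^T\!\int_\Omega \hat w\,g\,\mathrm{d}x\,\mathrm{d}t\le 0$, hence $w\le 0$. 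The sign $w\ge 0$ enters only at the very last step. This duality argument is really a comparison principle for subsolutions of the heat inequality with zero initial data, independent of the sign of the unknown. It is the kind of argument that survives when the unknown is too rough to serve as its own test function.

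\textbf{Your route.} Your energy argument uses $w\ge 0$ from the outset, which makes $w(t,\cdot)$ itself an admissible element of $H^1_0(\Omega;\R_+)$. It then needs only the chain rule for $t\mapsto\lVert w(t,\cdot)\rVert_{L^2(\Omega)}^2$ and Gronwall, exactly the mechanism of the paper's own uniqueness proof in Appendix~\ref{appendix:proof_uniqueness_weak_solutions}. Since $w$ here is both nonnegative and regular enough ($w\in L^2(0,T;H^1_0(\Omega))$, $\partial_t w\in L^2(0,T;L^2(\Omega))$, $w\in L^\infty$), your route is shorter. It dispenses with the auxiliary PDE, with the maximum principle for a nonzero source (which the paper justifies only by pointing to the homogeneous case), and with the $H^{-1}$ integration by parts.

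\textbf{The exceptional null set.} Your treatment is sound. You take a countable dense subset of the nonnegative cone of $H^1_0(\Omega)$ and use that, for a.e.\ fixed $t$, the map $v\mapsto\int_\Omega v\,\partial_t w+\nabla_x v\cdot\nabla_x w-2L_{\bm{Y}}vw\,\mathrm{d}x$ is continuous on $H^1_0(\Omega)$. This is in fact more careful than the paper, which plugs the time-dependent test function $\varphi_g(T-t,\cdot)$ into an inequality that holds for a.e.\ $t$ per fixed test function, without comment.
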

\begin{proof}
Let us consider for almost all $(t,x) \in [0,T]\times\Omega$,
\begin{equation}
\label{eq:definition_w_hat}
    \hat{w}(t,x) = e^{-2tL_{\bm{Y}}} w(t,x).
\end{equation}
Since $\left\lvert \hat{w}(t,x) \right\rvert \leq \left\lvert w(t,x) \right\rvert$, it follows from \eqref{eq:space_w} that
\begin{equation}
\label{eq:space_w_hat}
    \hat{w} \in H^1(0,T;L^r(\Omega)) \cap L^2(0,T;W^{1,r}_0(\Omega)).
\end{equation}
Furthermore, one has for almost all $(t,x) \in [0,T]\times\Omega$,
\begin{gather*}
\partial_t \hat{w}(t,x) = e^{-2tL_{\bm{Y}}} \left( \partial_t w(t,x) -2L_{\bm{Y}} w(t,x) \right),\\
\nabla_x \hat{w}(t,x) = e^{-2tL_{\bm{Y}}} \nabla_x w(t,x).
\end{gather*}
Therefore, the variational inequality \eqref{eq:variational_inequality_w} on $w$ becomes for any $v^+\in H^1_0(\Omega;\R_+)$ and for almost all $t\in[0,T]$,
\begin{equation}
\label{eq:variational_inequality_w_hat}
\int_\Omega v^+(x) \partial_t \hat{w}(t,x) + \nabla_x v^+(x) \cdot \nabla_x \hat{w}(t,x) \ud x \leq 0.
\end{equation}
Let $g\in C^\infty_c((0,T)\times\Omega; \R_+)$ be nonnegative and consider the following heat equation
\begin{align}
\label{eq:heat_equation_for_test_function}
\begin{dcases}
    \partial_t \varphi_g(t,x) - \Delta_x\varphi_g(t,x) = g(T-t,x), &\quad t \in (0,T), \, x \in \Omega,\\
    \varphi_g(t,x) = 0, &\quad t\in(0,T), \, x \in \partial \Omega,\\
    \varphi_g(0,x) = 0, &\quad x\in\Omega.
\end{dcases}
\end{align}
We denote by $\varphi_{g} \in H^1(0,T;H^{-1}(\Omega)) \cap L^2(0,T;H^1_0(\Omega)) \subset C([0,T];L^2(\Omega))$ the unique weak solution to the PDE  \eqref{eq:heat_equation_for_test_function} (existence, uniqueness, regularity and time continuity are stated in \cite[Theorem 4.30]{weak_solutions_and_weak_max_principle_heat_equation}). It satisfies for any $v\in H^1_0(\Omega)$ and almost all $t\in [0,T]$,
\begin{equation}
\label{eq:weak_formulation_phi_g}
    \int_\Omega v(x)g(T-t,x) \ud x = \bigl(v,\partial_t \varphi_g(t,\cdot)\bigr)_{H^1_0(\Omega),H^{-1}(\Omega)} + \int_\Omega \nabla_x v(x) \cdot \nabla_x \varphi_g(t,x) \ud x,
\end{equation}
where $\bigl( \cdot , \cdot \bigr)_{H^1_0(\Omega),H^{-1}(\Omega)}$ denotes the duality pairing between $H_0^1(\Omega)$ and $H^{-1}(\Omega)$. Since $\hat{w}$ satisfies \eqref{eq:space_w_hat}, it follows that $v_t = \hat{w}(T-t,\cdot) \in W^{1,r}_0(\Omega) \subset H^1_0(\Omega)$ for almost all $t\in[0,T]$. Therefore, using $v_t$ as a test function in \eqref{eq:weak_formulation_phi_g} gives for almost all $t\in[0,T]$,
\begin{multline*}
    \int_\Omega \hat{w}(T-t,x)g(T-t,x) \ud x = \bigl(\hat{w}(T-t,\cdot),\partial_t \varphi_g(t,\cdot)\bigr)_{H^1_0(\Omega),H^{-1}(\Omega)}\\
    + \int_\Omega \nabla_x \hat{w}(T-t,x) \cdot \nabla_x \varphi_g(t,x) \ud x.
\end{multline*}
Integrating the equation above with respect to time and making the change of variables $t'=T-t$ on the left-hand side yields
\begin{multline}
\label{eq:weak_formulation_phi_g_integrated_in_time}
    \int_0^T \int_\Omega \hat{w}(t,x)g(t,x) \ud x \ud t = \int_0^T \bigl(\hat{w}(T-t,\cdot),\partial_t \varphi_g(t,\cdot)\bigr)_{H^1_0(\Omega),H^{-1}(\Omega)} \ud t\\
    + \int_0^T \int_\Omega \nabla_x \hat{w}(T-t,x) \cdot \nabla_x \varphi_g(t,x) \ud x \ud t.
\end{multline}
Next, since both $\hat{w}$ and $\varphi_g$ belong to $H^1(0,T;H^{-1}(\Omega)) \cap L^2(0,T;H^1_0(\Omega))$, we may apply the corresponding integration-by-parts formula (see \cite[Lemma 5.1]{IBP_Lions_lemma} for instance) to the pair $(\hat{w},\varphi_g)$, obtaining
\vspace{-0.02cm}
\begin{multline*}
\int_0^T \bigl(\hat{w}(T-t,\cdot),\partial_t \varphi_g(t,\cdot)\bigr)_{H^1_0(\Omega),H^{-1}(\Omega)} \ud t = -\int_0^T \bigl(\varphi_g(t,\cdot), -\partial_t\hat{w}(T-t,\cdot)\bigr)_{H^1_0(\Omega),H^{-1}(\Omega)} \ud t\\
+ \int_\Omega \hat{w}(0,x)\varphi_g(T,x)-\hat{w}(T,x)\varphi_g(0,x) \ud x.
\end{multline*}
Recall from \eqref{eq:initial_condition_w} and \eqref{eq:heat_equation_for_test_function} that $\hat{w}(0,\cdot) = 0$ and $\varphi_g(0,\cdot)=0$. Therefore, the equation above simply writes
\begin{equation}
\label{eq:intermediate_IBP_H-1}
\int_0^T\! \bigl(\hat{w}(T-t,\cdot),\partial_t \varphi_g(t,\cdot)\bigr)_{H^1_0(\Omega),H^{-1}(\Omega)} \ud t = \int_0^T \!\bigl(\varphi_g(t,\cdot), \partial_t\hat{w}(T-t,\cdot)\bigr)_{H^1_0(\Omega),H^{-1}(\Omega)} \ud t.
\end{equation}
Finally, it follows from \eqref{eq:space_w_hat} that $\partial_t \hat{w}(T-t,\cdot) \in L^r(\Omega) \subset L^2(\Omega)$ for almost all $t\in[0,T]$. Thus, the dual pairing in the right-hand side of \eqref{eq:intermediate_IBP_H-1} is simply the inner product between two functions in $L^2(\Omega)$, which gives
\begin{equation}
\label{eq:final_IBP_H-1}
\int_0^T \bigl(\hat{w}(T-t,\cdot),\partial_t \varphi_g(t,\cdot)\bigr)_{H^1_0(\Omega),H^{-1}(\Omega)} \ud t = \int_0^T \int_\Omega \varphi_g(t,x) \partial_t \hat{w}(T-t,x) \ud x  \ud t.
\end{equation}
Thus, one may use the identity \eqref{eq:final_IBP_H-1} in the weak formulation \eqref{eq:weak_formulation_phi_g_integrated_in_time} to obtain
\begin{align}
\label{eq:identity_phi_g_and_w_hat}
    \int_0^T \!\!\!\!\int_\Omega \hat{w}(t,x)g(t,x) \ud x \ud t &=\! \int_0^T \!\!\!\!\int_\Omega \!\varphi_g(t,x) \partial_t \hat{w}(T\!-\!t,x) \!+\! \nabla_x \hat{w}(T\!-\!t,x) \!\cdot\! \nabla_x \varphi_g(t,x) \ud x \ud t \notag\\
    &=\! \int_0^T \!\!\!\!\int_\Omega \!\varphi_g(T\!-\!t,x) \partial_t \hat{w}(t,x) \!+\! \nabla_x \hat{w}(t,x) \!\cdot\! \nabla_x \varphi_g(T\!-\!t,x) \ud x \ud t.
\end{align}
Let us note that, since $\varphi_g$ is a weak solution to the heat equation \eqref{eq:heat_equation_for_test_function} with a nonnegative source term $g$, the weak maximum principle (we refer to \cite[Proposition 4.34]{weak_solutions_and_weak_max_principle_heat_equation} for the homogeneous case $g=0$; the proof applies without modification when $g\ge 0$) implies that
\begin{equation*}
\varphi_g(T-t,\cdot)\ge 0, \quad \text{a.e. in }\Omega,\quad \forall t\in[0,T].
\end{equation*}
Therefore, one has $v_t^+ \coloneq \varphi_g(T-t,\cdot) \in H^1_0(\Omega,\R_+)$ for almost all $t\in[0,T]$. Then, using $v_t^+$ as a test function in \eqref{eq:variational_inequality_w_hat} yields for almost all $t\in[0,T]$,
\begin{equation}
\label{eq:estimate_phi_g_and_w_hat}
\int_\Omega \varphi_g(T-t,x) \partial_t \hat{w}(t,x) + \nabla_x \varphi_g(T-t,x) \cdot \nabla_x \hat{w}(t,x) \ud x \leq 0.
\end{equation}
Finally, integrating \eqref{eq:estimate_phi_g_and_w_hat} with respect to time and using
\eqref{eq:identity_phi_g_and_w_hat}, we obtain
\begin{equation}
\label{eq:estimate_w_and_g}
\int_0^T \int_\Omega \hat{w}(t,x)g(t,x) \ud x \ud t \leq 0,
\end{equation}
for any nonnegative function $g \in C^\infty_c((0,T)\times\Omega;\R_+)$. It follows that
\begin{equation*}
    \hat{w}(t,x) \leq 0, \quad \text{a.e. } (t,x) \in [0,T]\times\Omega.
\end{equation*}
By \eqref{eq:definition_w_hat}, we then have
\begin{equation*}
    w(t,x) \leq 0, \quad \text{a.e. } (t,x) \in [0,T]\times\Omega.
\end{equation*}
Since, moreover, $w(t,x) = \inner{\mutx}{\left(y - y^*(t,x) \right)^2} \geq 0$, we conclude that
\begin{equation*}
    w(t,x) = 0, \quad \text{a.e. } (t,x) \in [0,T]\times\Omega.
\end{equation*}
\end{proof}
For clarity, let us rewrite equation \eqref{eq:w_equal_0} in terms of $\mutx$
\begin{equation}
\int_{\bm{Y}\times\bm{Z}} (y-y^*(t,x))^2 \ud \mutx(y,z) = 0, \quad \text{a.e. } (t,x)\in[0,T]\times\Omega.
\end{equation}
An immediate consequence to the equation above is that the marginal in $y$ of $\mutx$ is concentrated on $y^*(t,x)$. Stated differently, the measure $\mutx$ disintegrates as follows:
\begin{equation}
    \mutx(\mathrm{d}y,\mathrm{d}z_0,\mathrm{d}\overline{z}) = \delta_{y^*(t,x)}(\mathrm{d}y) \nu_{(t,x)}(\mathrm{d}z_0,\mathrm{d}\overline{z}) , \quad \text{a.e. } (t,x)\in[0,T]\times\Omega,
\end{equation}
where $\nu_{(t,x)}$ is the conditional probability measure on $z = (z_0,\overline{z})$, given $y$, and induced by $\mutx$. Therefore, for all continuous function $h\in C(\R)$, we have
\begin{equation}
    \inner{\mutx}{h(y)} = h(y^*(t,x)), \quad \text{a.e. } (t,x) \in [0,T]\times\Omega.
\end{equation}
We can use this concentration result and the fact that $w=0$ in \eqref{eq:PDE_w} to obtain for any $v\in H^1_0(\Omega)$ and for almost all $t\in[0,T]$,
\begin{equation}
\label{eq:intermediate_concentration_z_bar}
    \int_\Omega 2v(x) \inner{\mutx}{\left\lVert \overline{z} - \nabla_xy^*(t,x) \right\rVert^2} \ud x = 0.
\end{equation}
Since \eqref{eq:intermediate_concentration_z_bar} holds in particular for any $v\in C_c^\infty(\Omega)\subset H^1_0(\Omega)$, it follows that
\begin{equation}
    \inner{\mutx}{\left\lVert \overline{z} - \nabla_xy^*(t,x) \right\rVert^2} = 0, \quad \text{a.e. } (t,x) \in [0,T]\times\Omega.
\end{equation}
Therefore, $\mutx$ is concentrated on the graph of $\nabla_x y^*$ as well. We can update the disintegration of the measure $\mutx$ to obtain for almost all $(t,x)\in[0,T]\times\Omega$,
\begin{equation}
\mutx(\mathrm{d}y,\mathrm{d}z_0,\mathrm{d}\overline{z}) = \delta_{y^*(t,x)}(\mathrm{d}y) \delta_{\nabla_xy^*(t,x)}(\mathrm{d}\overline{z}) \theta_{(t,x)}(\mathrm{d}z_0),
\end{equation}
where $\theta_{(t,x)}$ is the conditional probability measure on $z_0$, given $y$ and $\overline{z}$.

Finally, we can use equation \eqref{eq:moment_dissipation_z0} satisfied by $\mutx$ and the concentration on $y^*$ and $\nabla_xy^*$ to obtain for any $\varphi \in C_c^\infty((0,T))$,
\begin{multline}
\label{eq:intermediate_equation_z0}
\int_0^T\int_\Omega \varphi(t)\inner{\mutx}{z_0^2} \ud x \ud t = \int_0^T \int_\Omega  \frac{1}{2}\varphi'(t)\left\lVert \nabla_x y^*(t,x) \right\rVert^2 \\
+ \varphi(t)f(t,x,y^*(t,x))\inner{\mutx}{z_0} \ud x \ud t.
\end{multline}
We know from Proposition \ref{prop:existence_of_weak_derivative_for_Young_measures} that
\begin{align}
\begin{split}
\label{eq:mutx_z0=partial_t_y}
    \inner{\mutx}{z_0} &= \partial_t \inner{\mutx}{y}=\partial_ty^*(t,x), \quad \text{a.e. } (t,x) \in [0,T]\times\Omega.
\end{split}
\end{align}
Using identity \eqref{eq:mutx_z0=partial_t_y} in \eqref{eq:intermediate_equation_z0} yields for any $\varphi \in C^\infty_c((0,T))$,
\begin{multline}
\label{eq:intermediate_equation_z0_and_y^*}
\int_0^T\int_\Omega \varphi(t)\inner{\mutx}{z_0^2} \ud x \ud t = \int_0^T \int_\Omega  \frac{1}{2}\varphi'(t)\left\lVert \nabla_x y^*(t,x) \right\rVert^2 \\
+ \varphi(t)\partial_t y^*(t,x) f(t,x,y^*(t,x)) \ud x \ud t.
\end{multline}
Since $y^*$ is a weak solution to the PDE \eqref{eq:semilinear_PDE}, it satisfies \eqref{eq:weak_formulation_dissipation}. Thus, \eqref{eq:intermediate_equation_z0_and_y^*} becomes for any $\varphi \in C^\infty_c((0,T))$,
\begin{align*}
\int_0^T\int_\Omega \varphi(t)\inner{\mutx}{z_0^2} \ud x \ud t = \int_0^T\int_\Omega \varphi(t) (\partial_t y^*(t,x))^2 \ud x \ud t,
\end{align*}
which writes equivalently
\begin{align}
\label{eq:equality_z0_partial_y}
\int_\Omega \inner{\mutx}{z_0^2} \ud x = \int_\Omega (\partial_t y^*(t,x))^2 \ud x, \quad \text{a.e. } t \in [0,T].
\end{align}
Let us note that identity \eqref{eq:mutx_z0=partial_t_y} implies for almost all $(t,x) \in [0,T]\times\Omega$,
\begin{equation}
\label{eq:intermediate_null_variance_z0}
    \inner{\mutx}{\left(z_0 - \partial_t y^*(t,x) \right)^2} = \inner{\mutx}{z_0^2} - (\partial_t y^*(t,x))^2.
\end{equation}
Therefore, combining \eqref{eq:equality_z0_partial_y} and \eqref{eq:intermediate_null_variance_z0} gives for almost all $t\in[0,T]$,
\begin{align*}
    \int_\Omega \inner{\mutx}{(z_0 - \partial_t y^*(t,x))^2} \ud x =  \int_\Omega \inner{\mutx}{z_0^2} - (\partial_t y^*(t,x))^2 \ud x = 0.
\end{align*}
Since $\inner{\mutx}{(z_0 - \partial_t y^*(t,x))^2} \geq 0$, it follows that
\begin{equation}
    \inner{\mutx}{(z_0 - \partial_t y^*(t,x))^2} = 0, \quad \text{a.e. } (t,x) \in [0,T]\times\Omega.
\end{equation}
We conclude that $\mutx$ is concentrated on the graph of $\partial_t y^*$ and writes for almost all $(t,x)\in[0,T]\times\Omega$,
\begin{equation}
\mutx(\mathrm{d}y,\mathrm{d}z_0,\mathrm{d}\overline{z}) = \delta_{y^*(t,x)}(\mathrm{d}y) \delta_{\nabla_xy^*(t,x)}(\mathrm{d}\overline{z}) \delta_{\partial_ty^*(t,x)}(\mathrm{d}z_0).
\end{equation}
This completes the proof of Theorem \ref{theorem:concentration_emv_solutions}. \qed

\section{Relation to occupation measure relaxations}\label{sec:occupation}

In this section, we clarify the link between the above developments and a previously available linear measure formulation designed for nonlinear PDEs and the moment-SOS hierarchy \cite{KHL-2018}. Note that in this reference, the focus was on the linear measure/moment formulation, and the relaxation gap issue was not addressed.

Let us define the set
\begin{equation*}
Q_T\coloneq [0,T]\times\overline{\Omega}    
\end{equation*}
and let $\bm{Y} \subset \R$ be a compact set defined as in \eqref{eq:compact_Y}. In this section, we assume that the time and space derivatives of the weak solution $y^*\in X_r$ to the PDE \eqref{eq:semilinear_PDE} are bounded. Therefore, there exists a compact set $\bm{Z}\subset \R\times\R^{n}$ such that
\begin{equation*}
    \left( \partial_t y^*(t,x), \nabla_xy^*(t,x) \right) \in \bm{Z}, \quad \text{a.e. } (t,x) \in [0,T]\times\Omega.
\end{equation*}
This additional assumption is made to simplify the exposition of the relaxation with occupation measures presented below. In particular, it allows us to work with measures supported on compact sets and avoids having to explicitly track the finite $r$-moments assumption that would otherwise be needed for measures with unbounded support.

Since $\Omega$ has a locally Lipschitz boundary, we may equip $\partial Q_T$ with the associated surface measure $\sigma$, so that the usual integration-by-parts formula in time and space holds. We denote by $$\eta(t,x) \coloneq \begin{bmatrix}
\eta_0(t,x) & \eta_1(t,x) & \ldots & \eta_n(t,x)
\end{bmatrix}^T$$ the outward unit surface normal vector to $\partial Q_T$. We also decompose the boundary of $Q_T$ as follows:
\begin{equation}
\label{eq:partition_boundary_QT}
    \partial Q_T = \partial Q_1 \cup \partial Q_2 \cup \partial Q_3,
\end{equation}
where
\begin{equation*}
    \partial Q_1 = \{0\}\times\overline{\Omega}, \qquad \partial Q_2 = \{T\}\times\overline{\Omega} \qquad \text{and} \qquad  \partial Q_3 = [0,T]\times\partial\Omega.
\end{equation*}
\begin{remark}
Note that any measure
$\hat{\mu}_\partial\in\mathcal{M}_+(\partial Q_T\times\bm{Y}\times\bm{Z})$
whose $(t,x)$-marginal coincides with the surface measure $\sigma$ on
$\partial Q_T$ can be decomposed according to the three boundary components as
\begin{equation*}
    \hat{\mu}_\partial = \hat{\mu}_{\partial,1} + \hat{\mu}_{\partial,2} + \hat{\mu}_{\partial,3},
\end{equation*}
where
$\hat{\mu}_{\partial,i}\in
\mathcal{M}_+(\partial Q_i\times\bm{Y}\times\bm{Z})$
is the restriction of $\hat{\mu}_\partial$ to
$\partial Q_i\times\bm{Y}\times\bm{Z}$, for each $i\in\{1,2,3\}$. This
decomposition is unique since the intersections of the boundary pieces
$\partial Q_i$ have zero surface measure, and hence zero
$\hat{\mu}_\partial$-measure.
\end{remark}
We next introduce the occupation-measure identities in the specific form needed
to compare the occupation-measure formulation with emv solutions. Except for
\eqref{eq:dissipation_occupation_measures}, these identities follow from the
admissibility conditions of the infinite-dimensional linear program
\cite[Equation~(16)]{KHL-2018} when applied to the PDE
\eqref{eq:semilinear_PDE}.
\begin{definition}[Occupation-measure identities]
\label{def:occupation_measure_identities}
The pair
$$
(\hat{\mu},\hat{\mu}_\partial)
\in
\mathcal{M}_+(Q_T\times\bm{Y}\times\bm{Z})
\times
\mathcal{M}_+(\partial Q_T\times\bm{Y}\times\bm{Z})
$$
is said to satisfy the \emph{occupation-measure identities} if for every test
function of the form
$$
\phi(t,x,y)=\varphi(t)v(x)\beta(y),
$$
with $\varphi\in C^\infty([0,T])$, $v\in H^1(\Omega)$ and
$\beta\in C^1(\bm{Y})$, the following identities hold:
\begin{subequations}
\begin{multline}
    \int_{Q_T\times\bm{Y}\times\bm{Z}} \left[ \frac{\partial \phi}{\partial t}(t,x,y) + z_0\frac{\partial \phi}{\partial y}(t,x,y) \right]\ud \hat{\mu}(t,x,y,z)\\
    - \int_{\partial Q_T\times\bm{Y}\times\bm{Z}} \phi(t,x,y) \eta_0(t,x) \ud \hat{\mu}_\partial(t,x,y,z) = 0, \label{eq:IBP_t_occupation_measure}
\end{multline}
\begin{multline}
    \int_{Q_T\times\bm{Y}\times\bm{Z}} \left[ \frac{\partial \phi}{\partial x_j}(t,x,y) + z_j\frac{\partial \phi}{\partial y}(t,x,y) \right] \ud \hat{\mu}(t,x,y,z)\\
    -\int_{\partial Q_T\times\bm{Y}\times\bm{Z}} \phi(t,x,y) \eta_j(t,x) \ud \hat{\mu}_\partial(t,x,y,z) = 0,\label{eq:IBP_x_occupation_measure}
\end{multline}
\text{for all $ j \in \{1,\,\ldots,\,n \}$ and}
\begin{multline}
\label{eq:weak_formulation_occupation_measure}
 \int_{Q_T\!\times\!\bm{Y}\!\times\!\bm{Z}} \!\Bigl[ \phi(t,x,y)\! \left( z_0 \!-\! f(t,x,y) \right) +\!\! \sum_{1\leq j \leq n} \!\!\Bigl( \frac{\partial \phi}{\partial x_j}(t,x,y) + z_j \frac{\partial \phi}{\partial y}(t,x,y)\Bigr)z_j \Bigr] \!\ud \hat{\mu}(t,x,y,z)\\
 - \sum_{1\leq j \leq n} \int_{\partial Q_T\times\bm{Y}\times\bm{Z}}  \phi(t,x,y) z_j \eta_j(t,x)  \ud \hat{\mu}_\partial(t,x,y,z) = 0,
\end{multline}
\begin{multline}
\label{eq:dissipation_occupation_measures}
\int_{Q_T\times\bm{Y}\times\bm{Z}} \varphi(t) z_0^2 - \frac{1}{2}\varphi'(t) \left\lVert \overline{z}\right\rVert^2 - \varphi(t) z_0 f(t,x,y) \ud \hat{\mu}(t,x,y,z) \\
+\int_{\partial Q_T\times\bm{Y}\times\bm{Z}} \frac{1}{2} \varphi(t) \left\lVert \overline{z} \right\rVert^2 \eta_0(t,x) \ud \hat{\mu}_\partial(t,x,y,z) = 0,
\end{multline}
\begin{equation}
\label{eq:initial_condition_occupation_measure}
\int_{\partial Q_1 \times \bm{Y}\times\bm{Z}} \phi(t,x,y) \ud \hat{\mu}_{\partial,1}(t,x,y,z) = \int_{\Omega} \phi(0,x,y_0(x)) \ud x,
\end{equation}
\begin{equation}
\label{eq:boundary_condition_occupation_measure}
\int_{\partial Q_3 \times \bm{Y}\times\bm{Z}} \phi(t,x,y) \ud \hat{\mu}_{\partial,3}(t,x,y,z) = \int_{[0,T]\times\partial\Omega} \phi(t,x,0) \ud \sigma(t,x),
\end{equation}
\text{and for all $\psi \in C^\infty(Q_T)$,}
\begin{equation}
\label{eq:normalization_occupation_measure}
\int_{\partial Q_i \times \bm{Y}\times\bm{Z}} \psi(t,x) \ud \hat{\mu}_{\partial,i}(t,x,y,z) = \int_{\partial Q_i} \psi(t,x) \ud \sigma(t,x), \quad \forall i \in \{1,2,3\}.
\end{equation}
\end{subequations}
\end{definition}
More precisely, constraints
\eqref{eq:IBP_t_occupation_measure} and \eqref{eq:IBP_x_occupation_measure}
encode respectively the integration-by-parts formulas in time and space for occupation
measures. Equation \eqref{eq:weak_formulation_occupation_measure}
encodes the occupation-measure version of the weak formulation of the PDE, whereas equation \eqref{eq:dissipation_occupation_measures} encodes the energy identity \eqref{eq:weak_formulation_dissipation}. The constraints \eqref{eq:initial_condition_occupation_measure} and
\eqref{eq:boundary_condition_occupation_measure} impose the initial condition at $t=0$ and the homogeneous Dirichlet boundary condition on $[0,T]\times\partial\Omega$, respectively. Finally, \eqref{eq:normalization_occupation_measure} is a normalization
constraint requiring the $(t,x)$-marginal of $\hat{\mu}_{\partial,i}$ to coincide with the surface measure $\sigma$ on $\partial Q_i$.

\begin{remark}
\label{remark:disintegration_occupation_measure}
In Definition \ref{def:occupation_measure_identities}, $(\hat{\mu},\hat{\mu}_\partial)$ satisfies the integration-by-parts formula for occupation measures \eqref{eq:IBP_t_occupation_measure} and \eqref{eq:IBP_x_occupation_measure}, together with the normalization constraints \eqref{eq:normalization_occupation_measure} on $\hat{\mu}_\partial$. Therefore, the $(t,x)$-marginal of $\hat{\mu}$ is the Lebesgue measure on $Q_T$. By the disintegration theorem, there exists a family of probability measures $\{\mutx\}_{(t,x)\in Q_T}$ on
$\bm{Y}\times\bm{Z}$, defined for almost every $(t,x)\in Q_T$, such that
\begin{equation}
\label{eq:disintegration_occupation_measure}
\ud\hat{\mu}(t,x,y,z) = \ud\mutx(y,z) \ud x \ud t.
\end{equation}
In particular, the map $\mu\colon (t,x)\mapsto \mutx$ defines a Young measure on $\bm{Y}\times\bm{Z}$ parametrized by $Q_T$.
\end{remark}
In the next proposition, we show how the moment constraints for emv solutions introduced in this work can be recovered from the occupation-measure identities satisfied by $(\hat{\mu},\hat{\mu}_\partial)$.
\begin{proposition}
\label{prop:link_KHL_and_emv}
Suppose that the pair $(\hat{\mu},\hat{\mu}_\partial)\in \mathcal{M}_+(Q_T\times\bm{Y}\times\bm{Z})\times\mathcal{M}_+(\partial Q_T\times\bm{Y}\times\bm{Z})$ satisfies the occupation-measure identities. Then the Young measure $\mu$ obtained from the disintegration \eqref{eq:disintegration_occupation_measure} of the occupation measure $\hat{\mu}$ is an emv solution to the PDE \eqref{eq:semilinear_PDE}. 
\end{proposition}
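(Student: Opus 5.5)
We must verify each condition of Definition \ref{def:emv_solutions} for the disintegrated Young measure $\mu$ of Remark \ref{remark:disintegration_occupation_measure}. Since $\bm{Y}\times\bm{Z}$ is compact, $\mu$ has finite moments of every order, in particular $\mu\in\mathcal{Y}^r([0,T]\times\Omega;\bm{Y}\times\bm{Z})$. The constraints are linear in $\hat\mu$, so the plan is to specialize the test function $\phi=\varphi v\beta$ in each identity and read off the corresponding emv constraint.

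\emph{Integration-by-parts formula and boundary/initial data.} First take $\varphi$ and $v$ compactly supported in $(0,T)$ and $\Omega$. Then every boundary integral in \eqref{eq:IBP_t_occupation_measure}–\eqref{eq:IBP_x_occupation_measure} vanishes, because $\phi=0$ on $\partial Q_T$. Using \eqref{eq:disintegration_occupation_measure}, these identities become \eqref{eq:IBP_in_time}–\eqref{eq:IBP_in_space} for tensor test functions $\varphi(t)v(x)$. Tensor products are dense in $C^\infty_c((0,T)\times\Omega)$ in the $C^1$ topology, so the identities extend to all $\psi$. Proposition \ref{prop:existence_of_weak_derivative_for_Young_measures} then yields $m_1,m_2\in W^{1,r}$ with their continuous representatives.

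For the traces, take $\varphi\in C^\infty([0,T])$ and $v\in C^\infty(\overline\Omega)$ in \eqref{eq:IBP_x_occupation_measure} with $\beta(y)=y$ or $y^2$. Comparing with the Green formula for the Sobolev function $m_k$ identifies the trace of $m_k$ on $\partial Q_3$ with $\int y^k\,\ud\hat\mu_{\partial,3}$, disintegrated against $\sigma$ via \eqref{eq:normalization_occupation_measure}. By \eqref{eq:boundary_condition_occupation_measure}, this trace is $0^k=0$, which gives \eqref{eq:boundary_condition_m1} and \eqref{eq:boundary_condition_m2}. The same argument with \eqref{eq:IBP_t_occupation_measure} on $\partial Q_1$ (where $\eta_0=-1$) and \eqref{eq:initial_condition_occupation_measure} gives $m_1(0,\cdot)=y_0$ and $m_2(0,\cdot)=y_0^2$. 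Equality holds in $L^r(\Omega)$ because of the continuity \eqref{eq:continuous_m1_and_m2}.

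\emph{Weak formulations.} Put $\beta\equiv1$, $\varphi\in C_c^\infty((0,T))$ and $v\in C_c^\infty(\Omega)$ in \eqref{eq:weak_formulation_occupation_measure}. The boundary terms drop, and we get $\int\varphi[v\inner{\mutx}{z_0}+\nabla v\cdot\inner{\mutx}{\overline z}-v m_f]=0$. This is \eqref{eq:weak_formulation_m1} once $\inner{\mutx}{z_0}=\partial_t m_1$ and $\inner{\mutx}{\overline z}=\nabla_x m_1$ are substituted.

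Next put $\beta(y)=y$. Since $\partial_y\phi=\varphi v$, the quadratic term $z_j^2\varphi v$ produces $\varphi v\inner{\mutx}{\lVert\overline z\rVert^2}$. We also get $\varphi y z_0 v$ and $\varphi\nabla v\cdot y\overline z$, which are half of $v\partial_t m_2$ and $\nabla v\cdot\nabla m_2$, because $\partial_t m_2=\inner{\mutx}{2yz_0}$. Multiplying by $2$ gives exactly \eqref{eq:weak_formulation_m2} with $\hat m_f$. Both identities pass from $v\in C^\infty_c(\Omega)$ to $v\in H^1_0(\Omega)$ by density, using the integrability in Proposition \ref{prop:integrability_moments_of_interest} (for $m_2$ together with Proposition \ref{prop:properties_sobolev_Lr2}).

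\emph{Dissipation identity.} Take $\varphi\in C^\infty_c((0,T))$ in \eqref{eq:dissipation_occupation_measures}. On $\partial Q_1\cup\partial Q_2$ we have $\varphi=0$, and on $\partial Q_3$ we have $\eta_0=0$, so the boundary term vanishes. Disintegrating then gives \eqref{eq:moment_dissipation_z0} directly.

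\emph{Main obstacle.} The delicate step is the trace identification: converting the boundary measures into the pointwise statements \eqref{eq:boundary_condition_m1}, \eqref{eq:initial_condition_m1} and their $m_2$ analogues. I would handle it by subtracting the classical Green formula for $W^{1,r}$ functions on the Lipschitz cylinder $Q_T$ from the occupation identity. This leaves $\int_{\partial Q_T}\phi\,(\mathrm{tr}\,m_k-\text{boundary moment})\,\eta\,\ud\sigma=0$ for all smooth $\phi$, which forces equality $\sigma$-a.e. on each piece where the relevant normal component is nonzero.
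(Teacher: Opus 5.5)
Your proposal is correct and follows essentially the paper's route: you specialize $\phi=\varphi v\beta$ in each occupation identity ($\beta\equiv 1$ and $\beta(y)=y$ for the $m_1$ and $m_2$ weak formulations, compactly supported $\varphi$ for the dissipation identity), and you recover the initial and boundary data by comparing the occupation-measure integration-by-parts identities with the classical one for the Sobolev function $m_k$. You are also slightly more explicit than the paper on density of sums of tensor products and on the vanishing boundary term, while at $t=0$ the paper avoids the cylinder trace theorem by applying the fundamental theorem of calculus to $t\mapsto\int_\Omega\varphi v m_k\,\mathrm{d}x$ with $\varphi(0)=1$ and $\varphi(T)=0$.
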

\begin{proof}
Let $(\hat{\mu},\hat{\mu}_\partial)\in \mathcal{M}_+(Q_T\times\bm{Y}\times\bm{Z})\times\mathcal{M}_+(\partial Q_T\times\bm{Y}\times\bm{Z})$ be a pair of measures that satisfies the occupation-measure identities. By Remark \ref{remark:disintegration_occupation_measure}, the disintegration \eqref{eq:disintegration_occupation_measure} holds for some Young measure $\mu \in \mathcal{Y}(Q_T;\bm{Y}\times\bm{Z})$. Since both $\bm{Y}$ and $\bm{Z}$ are compact, it follows that $\mu$ has finite $p$-moments for any $p \in [1,\infty)$.

First, the integration-by-parts formulas \eqref{eq:IBP_in_time} and
\eqref{eq:IBP_in_space} for the Young measure $\mu$ are recovered by choosing test functions of the form $\phi(t,x,y)=\varphi(t)v(x)\beta(y)$ in
\eqref{eq:IBP_t_occupation_measure} and \eqref{eq:IBP_x_occupation_measure}, with $\varphi\in C_c^\infty((0,T))$, $v\in C_c^\infty(\Omega)$ and $\beta\in C^1(\bm{Y})$.

Then, we recover the weak formulation \eqref{eq:weak_formulation_m1} satisfied by
$m_1$ by choosing $\phi(t,x,y)=\varphi(t)v(x)$ in \eqref{eq:weak_formulation_occupation_measure}, where $\varphi\in C_c^\infty((0,T))$ and $v\in H^1_0(\Omega)$. With this choice,
\eqref{eq:weak_formulation_occupation_measure} becomes
\begin{align*}
0 &= \int_{Q_T\times\bm{Y}\times\bm{Z}} \Bigl[ \varphi(t) v(x) \left( z_0 - f(t,x,y) \right) +\! \sum_{1\leq j \leq n} \!\!\Bigl( \varphi(t) \frac{\partial v}{\partial x_j}(t,x,y) z_j\Bigr) \Bigr] \ud \hat{\mu}(t,x,y,z)\\
&= \int_{Q_T\times\bm{Y}\times\bm{Z}}  \varphi(t) \Bigl[ v(x) z_0 + \nabla_x v(x) \cdot \overline{z} - v(x) f(t,x,y) \Bigl] \ud \hat{\mu}(t,x,y,z)\\
&= \int_0^T \varphi(t) \biggl[ \int_{\Omega} v(x) \inner{\mutx}{z_0} + \nabla_xv(x) \cdot \inner{\mutx}{\overline{z}} - v(x) \inner{\mutx}{f(t,x,y)} \ud x \biggr] \ud t.
\end{align*}
Using the fact that $\mu$ satisfies the integration-by-parts formulas for Young measures \eqref{eq:IBP_in_time} and \eqref{eq:IBP_in_space}, we obtain $\inner{\mutx}{z_0} = \partial_t m_1(t,x)$ and $\inner{\mutx}{\overline{z}} = \nabla_x m_1(t,x)$ from Proposition \ref{prop:existence_of_weak_derivative_for_Young_measures}. Therefore, it follows from the above equation that
\begin{equation*}
\int_0^T \varphi(t) \biggl[ \int_{\Omega} v(x) \partial_t m_1(t,x) + \nabla_xv(x) \cdot \nabla_x m_1(t,x) - v(x) m_f(t,x) \ud x \biggr] \ud t = 0,
\end{equation*}
which coincides with \eqref{eq:weak_formulation_m1}. Similarly, we recover the weak formulation \eqref{eq:weak_formulation_m2} satisfied by $m_2$ by choosing $\phi(t,x,y) = \varphi(t) v(x) y$ in \eqref{eq:weak_formulation_occupation_measure}, where $\varphi \in C^\infty_c((0,T))$ and $v \in H^1_0(\Omega)$.

The energy identity \eqref{eq:moment_dissipation_z0} follows directly from \eqref{eq:dissipation_occupation_measures} evaluated with $\varphi \in C^\infty_c((0,T))$, together with the disintegration \eqref{eq:disintegration_occupation_measure} of the occupation measure $\hat{\mu}$.

Next, we proceed to prove that $m_1$ and $m_2$ satisfy respectively the initial conditions \eqref{eq:initial_condition_m1} and \eqref{eq:initial_condition_m2}. Let $v \in C^\infty_c(\Omega)$ and $\varphi \in C_c^\infty([0,T))$. Suppose in addition that $\varphi(0)=1$. Then,
\begin{align*}
    \int_\Omega v(x) m_1(0,x) \ud x &= \int_\Omega \varphi(0) v(x) m_1(0,x) \ud x - \int_\Omega \varphi(T) v(x) m_1(T,x) \ud x \\
    &=-\int_0^T \frac{\!\ud}{\!\ud t} \left( \int_\Omega \varphi(t)v(x)m_1(t,x) \ud x \right) \ud t.
\end{align*}
Then,
\begin{align*}
    \int_\Omega v(x) m_1(0,x) \ud x &=-\int_0^T  \int_\Omega \varphi'(t)v(x) m_1(t,x) + \varphi(t)v(x) \partial_t m_1(t,x) \ud x \ud t \\
    &= -\int_0^T  \int_\Omega \int_{\bm{Y}\times\bm{Z}} \varphi'(t)v(x)y + \varphi(t)v(x) z_0 \ud \mutx(y,z) \ud x \ud t\\
    &= -\int_{Q_T\times\bm{Y}\times\bm{Z}} \varphi'(t)v(x)y + \varphi(t)v(x) z_0 \ud \hat{\mu}(t,x,y,z).
\end{align*}
Therefore, letting $\phi(t,x,y) = \varphi(t)v(x)y$ and using the integration-by-parts formula in time for occupation measures \eqref{eq:IBP_t_occupation_measure} in the equation above yields
\begin{align*}
    \int_\Omega v(x) m_1(0,x) \ud x &= -\int_{Q_T\times\bm{Y}\times\bm{Z}} \frac{\partial \phi}{\partial t}(t,x,y) + \frac{\partial \phi}{\partial y}(t,x,y) z_0 \ud \hat{\mu}(t,x,y,z)\\
    &= \int_{\partial Q_1 \!\times\! \bm{Y}\!\times\!\bm{Z}} \!\!\phi(t,x,y) \ud \hat{\mu}_{\partial,1}(t,x,y,z) -\!\! \int_{\partial Q_2 \!\times\! \bm{Y}\!\times\!\bm{Z}} \!\!\phi(t,x,y) \ud \hat{\mu}_{\partial,2}(t,x,y,z).
\end{align*}
Finally, since $\hat{\mu}_{\partial,1}$ satisfies \eqref{eq:initial_condition_occupation_measure} and since $\phi = 0$ on $\partial Q_2\times\bm{Y}\times\bm{Z}$, it follows that
\begin{align*}
    \int_\Omega v(x) m_1(0,x) \ud x = \int_{\Omega} \phi(0,x,y_0(x)) \ud x=\int_{\Omega} v(x)y_0(x) \ud x.
\end{align*}
Therefore, $m_1$ satisfies the initial condition
\begin{equation*}
    m_1(0,x) = y_0(x), \quad \text{for a.e. } x\in\Omega.
\end{equation*}
Similarly, one can recover the initial condition \eqref{eq:initial_condition_m2} on $m_2$ by considering the test function $\phi(t,x,y) = \varphi(t)v(x)y^2$ in \eqref{eq:IBP_t_occupation_measure}.

Finally, the homogeneous Dirichlet boundary conditions on $m_1$ and $m_2$ are obtained with the same method.
\end{proof}

\section{Conclusion}\label{sec:conclusion}
This paper introduces the notion of energy measure-valued solutions for semilinear parabolic PDEs. Such solutions are parametrized Young measures whose first- and second-order moments, involving both the state variable and its derivatives, satisfy variational equalities derived from the original PDE. The main result of the paper is that such solutions always concentrate on the classical weak solution of the PDE under mild assumptions on the nonlinear term of the equation. This constitutes a first step toward proving a result of absence of relaxation gap in the more involved context of PDE-constrained optimization. This could be done by showing that there is no relaxation gap when optimizing over a set of parametrized measured-valued solutions of a parabolic PDE, as what has been done in \cite{cardoen2024} for the hyperbolic case. Moreover, in the framework of optimal control of parabolic PDEs, one could introduce a control variable in the Young measures as in  \cite{lebarbe2025optimal} and  show some superposition principle to get the absence of relaxation gap. Another possible extension of the proposed work could be to show concentration of measure-valued solutions of semilinear PDE systems that depend nonlinearly on the solution and its spatial derivatives. This class of PDEs is relevant in the literature since it includes some formulations of the Navier-Stokes equations.

\section*{Acknowledgements}
The authors acknowledge the use of AI for assistance with brainstorming ideas and language editing. The final content, analysis and conclusions remain the sole responsibility of the authors.

This work was funded by the European Union under the project ROBOPROX (reg.~no.~CZ.02.01.01 / 00 / 22\_008/0004590) and by the ANR-DFG project MONET (ANR-25-CE48-6598-01). This work was also supported by the AID (Agence de l’Innovation de D\'{e}fense) from the French Ministry of the Armed Forces (Minist\`{e}re des Arm\'{e}es).

\begin{appendices}

\section{Proof of Proposition \ref{prop:properties_sobolev}}
\label{appendix:proof_properties_sobolev}

We first prove Proposition \ref{prop:properties_sobolev_W1r}.

Let $(u,v) \in W^{1,r}(\Omega)\times H^1(\Omega)$. Since $u \in W^{1,r}(\Omega) \subset C(\overline{\Omega})$ is bounded, one has immediately $uv \in L^2(\Omega)$. Now, let us consider
\begin{equation*}
    g \coloneq u \nabla_x v + v \nabla_x u.
\end{equation*}
The goal is to show that $g \in (L^2(\Omega))^n$. Since $\nabla_x v \in (L^2(\Omega))^n$ and since $u$ is bounded, it follows that $u \nabla_x v \in (L^2(\Omega))^n$. It remains to show that $v \nabla_x u \in (L^2(\Omega))^n$.

\noindent First, consider the case where $n>2$. The Sobolev embedding theorem (see \cite[Theorem 4.12]{adams_sobolev_2003}) yields the continuous embedding
\begin{equation}
H^1(\Omega)\hookrightarrow L^{p^*}(\Omega) \quad \text{with} \quad p^* = \frac{2n}{n-2}. \label{eq:embedding_subcrit}
\end{equation}
Since $v\in H^1(\Omega) \hookrightarrow L^{p^*}(\Omega)$ and $\nabla_x u \in (L^r(\Omega))^n$, it follows from Hölder’s inequality that $v \nabla_x u \in (L^s(\Omega))^n$, where $s$ is defined by
\begin{align*}
    \frac{1}{s} = \frac{1}{r} + \frac{1}{p^*}= \frac{1}{r} - \frac{1}{n} + \frac{1}{2}.
\end{align*}
Since $r$ is supposed to satisfy $r > n$, it follows that $s \geq 2$. Therefore, $v \nabla_x u \in (L^2(\Omega))^n$.

\noindent In the case $n=2$, the Sobolev embedding theorem yields
\begin{equation}
H^1(\Omega)\hookrightarrow L^{q}(\Omega), \quad \forall q \in [2,\infty). \label{eq:embedding_crit}
\end{equation}
It follows from Hölder’s inequality that $v\nabla_x u \in (L^{qr/(q+r)}(\Omega))^n$ for all $q \in [2,\infty)$. Since $r>n=2$, one can take $q = 2r/(r-2)$ and immediately obtain that $v\nabla_x u \in (L^{2}(\Omega))^n$.

\noindent Finally, in the case $n=1$, the Sobolev embedding theorem yields
\begin{equation}
H^1(\Omega)\hookrightarrow L^{\infty}(\Omega). \label{eq:embedding_supcrit}
\end{equation}
Therefore $v$ is bounded, and since $\nabla_x u \in (L^r(\Omega))^n \subset (L^2(\Omega))^n$, it follows that $v\nabla_x u \in (L^2(\Omega))^n$.

\noindent In any case, we have proven that $\nabla_x(uv) = g \in (L^2(\Omega))^n$. Thus, $uv \in H^1(\Omega)$.

We now proceed to prove Proposition \ref{prop:properties_sobolev_Lr2}.

\noindent Let $1/2 \leq \alpha \leq 1$ and let $(u,v) \in L^{\alpha r}(\Omega)\times H^1(\Omega)$. Let us first consider the case $n>2$. By the embedding \eqref{eq:embedding_subcrit} and by Hölder’s inequality, it follows that $uv \in L^s(\Omega)$, where
\begin{align*}
    \frac{1}{s} = \frac{1}{\alpha r} + \frac{1}{2n/(n-2)} = \frac{1-\alpha}{\alpha r} + \frac{1}{r} - \frac{1}{n} + \frac{1}{2}.
\end{align*}
Since $r>n$, it follows that
\begin{align*}
    \frac{1}{s} \leq \frac{1-\alpha}{\alpha r} + \frac{1}{2} = \frac{2(1-\alpha) + \alpha r}{2\alpha r} = \frac{1}{2\alpha}\left( \alpha \left(1 - \frac{2}{r} \right) + \frac{2}{r} \right).
\end{align*}
Since $r\geq 2$, the function $\alpha \mapsto \alpha \left(1 - \frac{2}{r} \right) + \frac{2}{r}$ is nondecreasing on $[1/2,1]$ and attains its maximum at $\alpha=1$. Therefore,
\begin{equation*}
    \frac{1}{s} \leq \frac{1}{2\alpha}.
\end{equation*}
Thus, $s \geq 2\alpha$ and therefore $uv \in L^{2\alpha}(\Omega)$.

\noindent In the case $n=2$, the embedding \eqref{eq:embedding_crit} and Hölder’s inequality imply that $uv \in L^{q\alpha r/(q+\alpha r)}(\Omega)$ for all $q\in[2,\infty)$. Since $r > n = 2$, one can take $q = 2\alpha r/(r-2)$ and immediately obtain that $uv \in L^{2\alpha}(\Omega)$.

\noindent Finally, the case $n=1$ follows directly from the embedding \eqref{eq:embedding_supcrit} and the fact that $u \in L^{\alpha r}(\Omega) \subset L^{2\alpha}(\Omega)$. \qed

\section{Proof of Proposition \ref{prop:uniqueness_weak_solution}}
\label{appendix:proof_uniqueness_weak_solutions}
Suppose that $y_1, y_2 \in X_r\subset C([0,T]\times\overline{\Omega})$ are two weak solutions to the PDE. By Definition \ref{def:weak_solutions}, they satisfy for all $v\in H^1_0(\Omega)$ and almost all $t\in[0,T]$,
\begin{multline}
\label{eq:proof_unique_weak_form}
\int_\Omega v(x) \partial_t(y_1-y_2)(t,x) + \nabla_x v(x) \cdot \nabla_x (y_1-y_2)(t,x)\\
    - v(x) (f(t,x,y_1(t,x))-f(t,x,y_2(t,x))) \ud x=0.
\end{multline}
Notice that for all $t\in[0,T]$, one has $v_t=y_1(t,\cdot)-y_2(t,\cdot) \in W^{1,r}_0(\Omega)\subset H^1_0(\Omega)$. Therefore, using $v_t$ as a test function in \eqref{eq:proof_unique_weak_form} yields for almost all $t\in[0,T]$,
\begin{multline}
\label{eq:proof_unique_vt}
\int_\Omega (y_1(t,x)-y_2(t,x)) \partial_t(y_1-y_2)(t,x) \ud x = - \int_\Omega \left\lVert \nabla_x (y_1 - y_2)(t,x) \right\rVert^2 \ud x\\ +\int_\Omega (y_1(t,x)-y_2(t,x)) \left(f(t,x,y_1(t,x))-f(t,x,y_2(t,x))\right) \ud x.
\end{multline}
Since $y_1, y_2 \in X_r \subset C([0,T]\times\overline{\Omega})$, both $y_1$ and $y_2$ are bounded. Therefore, there exists a compact set $K$ such that for any $t\in[0,T]$ and $x\in\overline{\Omega}$, $y_1(t,x)\in K$ and $y_2(t,x)\in K$. By Assumption \ref{assump:OSL_f} on $f$, it follows that there exists $L_K\geq0$ such that for almost all $t\in[0,T]$,
\begin{equation}
\label{eq:proof_unique_OSL}
\int_\Omega \!(y_1(t,x)-y_2(t,x)) \!\left(f(t,x,y_1(t,x))\!-\!f(t,x,y_2(t,x))\right) \!\ud x \leq \!L_K\! \left\lVert y_1(t,\cdot) - y_2(t,\cdot) \right\rVert^2_{L^2(\Omega)}\!\!.
\end{equation}
Moreover, by the Poincaré inequality, there exists a constant $C(\Omega)\geq0$ such that for any $t\in[0,T]$,
\begin{equation}
\label{eq:proof_unique_poincare}
    C(\Omega) \left\lVert y_1(t,\cdot) - y_2(t,\cdot) \right\rVert^2_{L^2(\Omega)} \leq \int_\Omega \left\lVert \nabla_x (y_1 - y_2)(t,x) \right\rVert^2 \ud x.
\end{equation}
Applying the estimates \eqref{eq:proof_unique_OSL} and \eqref{eq:proof_unique_poincare} in \eqref{eq:proof_unique_vt} gives for almost all $t\in[0,T]$,
\begin{equation}
\label{eq:proof_unique_ineq_intermediate}
\int_\Omega (y_1(t,x)-y_2(t,x)) \partial_t(y_1-y_2)(t,x) \ud x \leq (L_K-C(\Omega)) \left\lVert y_1(t,\cdot) -  y_2(t,\cdot) \right\rVert^2_{L^2(\Omega)}.
\end{equation}
Since $y_1-y_2 \in X_r$, it follows from \cite[Theorem 3, 5.9.2]{evans_pde} that the mapping $t\mapsto \left\lVert y_1(t,\cdot) -  y_2(t,\cdot) \right\rVert^2_{L^2(\Omega)}$ is absolutely continuous, with
\begin{equation*}
    \int_\Omega (y_1(t,x)-y_2(t,x)) \partial_t(y_1-y_2)(t,x) \ud x = \frac{1}{2}\frac{\ud }{\ud t} \left\lVert y_1(t,\cdot) -  y_2(t,\cdot) \right\rVert^2_{L^2(\Omega)}
\end{equation*}
for almost all $t\in[0,T]$. Therefore, equation \eqref{eq:proof_unique_ineq_intermediate} becomes for almost all $t\in[0,T]$,
\begin{equation*}
\frac{\ud }{\ud t} \left\lVert y_1(t,\cdot) -  y_2(t,\cdot) \right\rVert^2_{L^2(\Omega)} \leq 2(L_K-C(\Omega)) \left\lVert y_1(t,\cdot) -  y_2(t,\cdot) \right\rVert^2_{L^2(\Omega)}.
\end{equation*}
It follows from Grönwall's inequality that for every $t\in[0,T]$,
\begin{equation*}
    \left\lVert y_1(t,\cdot) -  y_2(t,\cdot) \right\rVert^2_{L^2(\Omega)} \leq e^{2t\left(L_K-C(\Omega)\right)} \left\lVert y_1(0,\cdot) -  y_2(0,\cdot) \right\rVert^2_{L^2(\Omega)}.
\end{equation*}
Since $y_1$ and $y_2$ satisfy the same initial condition \eqref{eq:initial_condition_weak_solution}, one eventually obtains for every $t\in[0,T]$,
\begin{equation*}
\left\lVert y_1(t,\cdot) -  y_2(t,\cdot) \right\rVert^2_{L^2(\Omega)}= 0.
\end{equation*}
Therefore, $y_1=y_2$ on $[0,T]\times\overline{\Omega}$.\qed

\section{Proof of Proposition \ref{prop:energy_identity_for_weak_solutions}}
\label{appendix:proof_energy_identity_for_weak_solutions}

Suppose that $y^* \in X_r \subset C([0,T]\times\overline{\Omega})$ is a weak solution to the PDE \eqref{eq:semilinear_PDE}.

Let us derive identity \eqref{eq:weak_formulation_energy} first. For $v \in H^1_0(\Omega)$ and $t \in [0,T]$, define $\tilde v_t := v\,y^*(t,\cdot)$. Because $y^*(t,\cdot)\in W^{1,r}_0(\Omega)$ for any $t\in[0,T]$, Proposition \ref{prop:properties_sobolev_W1r} shows that $\tilde v_t \in H^1_0(\Omega)$, with the product rule $\nabla_x \tilde{v}_t = y^*(t,\cdot) \nabla_x v + v \nabla_x y^*(t,\cdot)$. Therefore, using $\tilde{v}_t$ as a test function in \eqref{eq:weak_solution_ae_t} gives for almost all $t\in[0,T]$,
\begin{multline}
\label{eq:proof_energy_weak_vt}
    \int_\Omega v(x)y^*(t,x) \partial_ty^*(t,x) + \left(y^*(t,x)\nabla_x v(x) + v(x)\nabla_x y^*(t,x) \right)\cdot \nabla_x y^*(t,x)\\
    - v(x)y^*(t,x) f(t,x,y^*(t,x)) \ud x=0.
\end{multline}
Since $y^* \in X_r$, one has $(y^*)^2 \in X_r$ by Proposition \ref{prop:product_in_Xr}, where the space and time derivatives of $(y^*)^2$ are given by \eqref{eq:product_rule_space_Xr} and \eqref{eq:product_rule_time_Xr}, respectively. Using these two identities in \eqref{eq:proof_energy_weak_vt} gives
\begin{multline*}
    \int_\Omega v(x)\partial_t((y^*)^2/2)(t,x) + \nabla_x v(x) \cdot \nabla_x((y^*)^2/2)(t,x)- v(x)y^*(t,x) f(t,x,y^*(t,x))\\
    + v(x) \left\lVert \nabla_xy^*(t,x) \right\rVert^2 \!\ud x = 0.
\end{multline*}
We obtain \eqref{eq:weak_formulation_energy} by multiplying the previous equation by $2\varphi$ for any $\varphi \in C_c^\infty((0,T))$, and integrating in time.

Now, we proceed to derive identity \eqref{eq:weak_formulation_dissipation}. Notice that for almost all $t\in[0,T]$, one has $v_t =  \partial_t y^*(t,\cdot) \in W^{1,r}_0(\Omega) \subset H^1_0(\Omega)$. Therefore, using $v_t$ as a test function in \eqref{eq:weak_solution_ae_t} yields for almost all $t\in[0,T]$,
\begin{equation}
\label{eq:proof_energy_weak_dissipation}
\int_\Omega (\partial_t y^*(t,x))^2 \ud x = \int_\Omega -\nabla_x (\partial_t y^*)(t,x)\cdot\nabla_x y^*(t,x) +\partial_t y^*(t,x) f(t,x,y^*(t,x)) \ud x.
\end{equation}
The next step of the proof is to justify that one can swap the time and space derivatives $\nabla_x(\partial_t y^*) = \partial_t(\nabla_x y^*)$. First, since $y^* \in X_r=H^1(0,T;W^{1,r}_0(\Omega))$, one has from \cite[Theorem 2, 5.9.2]{evans_pde} that for any $t\in[0,T]$,
\begin{equation}
\label{eq:proof_energy_weak_absolut_continu}
    y^*(t,\cdot) = y^*(0,\cdot) + \int_0^t \partial_sy^*(s,\cdot) \ud s.
\end{equation}
Then, using the fact that $\nabla_x$ defines a bounded linear operator from $W^{1,r}_0(\Omega)$ to $(L^r(\Omega))^n$, and the fact that the Bochner integral commutes with bounded linear operators (see for instance \cite[Section 1.2.a]{analysis_in_Banach_space}), taking the space gradient of \eqref{eq:proof_energy_weak_absolut_continu} gives for any $t\in[0,T]$,
\begin{equation}
\label{eq:proof_energy_weak_absolut_continu_gradient}
    \nabla_x y^*(t,\cdot) = \nabla_xy^*(0,\cdot) + \int_0^t \nabla_x(\partial_s y^*(s,\cdot)) \ud s.
\end{equation}
Therefore, $\nabla_x y^* \in H^1(0,T;(L^r(\Omega))^n)$ and one can directly identify from \eqref{eq:proof_energy_weak_absolut_continu_gradient} its weak derivative in time as
\begin{equation}
    \partial_t(\nabla_xy^*) = \nabla_x(\partial_t y^*).
\end{equation}
It follows from \eqref{eq:proof_energy_weak_dissipation} that for almost all $t\in[0,T]$,
\begin{equation}
\label{eq:proof_energy_weak_dissipation_intermediate}
\int_\Omega (\partial_t y^*(t,x))^2 \ud x = \int_{\Omega} - \partial_t (\nabla_x y^*)(t,x) \cdot \nabla_x y^*(t,x) + \partial_t y^*(t,x) f(t,x,y^*(t,x)) \ud x.
\end{equation}
Let us consider the functional
\begin{equation*}
E(t) \coloneq \int_\Omega \frac{1}{2}\left\lVert \nabla_x y^*(t,x) \right\rVert^2 \ud x, \quad \forall t \in [0,T].
\end{equation*}
Applying Lions-Magenes lemma (see for instance \cite[Lemma 1.2, Chapter III]{temam_Lions_magenes_lemma} for an explicit statement of the lemma) to $\nabla_x y^* \in H^1(0,T;(L^r(\Omega))^n) \subset H^1(0,T;(L^2(\Omega))^n)$ gives for almost all $t\in[0,T]$,
\begin{equation}
\label{eq:proof_energy_weak_derivative_E}
E'(t) = \frac{1}{2}\frac{\!\ud}{\!\ud t} \left\lVert \nabla_x y^*(t,\cdot) \right\rVert_{(L^2(\Omega))^n}^2 = \int_\Omega \partial_t(\nabla_xy^*)(t,x) \cdot \nabla_x y^*(t,x) \ud x.
\end{equation}
Therefore, injecting \eqref{eq:proof_energy_weak_derivative_E} in \eqref{eq:proof_energy_weak_dissipation_intermediate} yields for almost all $t\in[0,T]$,
\begin{equation}
\label{eq:proof_energy_weak_intermediate_E}
\int_\Omega (\partial_t y^*(t,x))^2 \ud x = -E'(t)  + \int_{\Omega}  \partial_t y^*(t,x) f(t,x,y^*(t,x)) \ud x.
\end{equation}
Multiplying \eqref{eq:proof_energy_weak_intermediate_E} by any $\varphi \in C_c^\infty((0,T))$ and integrating in time gives
\begin{align*}
\int_0^T  \!\!\!\! \int_\Omega \varphi(t) (\partial_t y^*(t,x))^2 \!\ud x\! \ud t &= \!\!\!\int_0^T \!\!\!\!\!\!-\varphi(t)E'(t) \!\ud t +\! \int_0^T \!\!\!\!\int_\Omega \varphi(t) \partial_ty^*(t,x)f(t,x,y^*(t,x)) \!\ud x \!\ud t.
\end{align*}
After integrating by parts in time in the equation above, one finally obtains
\begin{align*}
\int_0^T  \!\!\!\! \int_\Omega \varphi(t) (\partial_t y^*(t,x))^2 \!\ud x\! \ud t &= \!\!\!\int_0^T \!\!\!\varphi'(t)E(t) \!\ud t + \int_0^T \!\!\!\!\int_\Omega \varphi(t) \partial_ty^*(t,x)f(t,x,y^*(t,x)) \!\ud x \!\ud t,
\end{align*}
which is exactly equation \eqref{eq:weak_formulation_dissipation}.\qed

\end{appendices}

\end{document}